\author{Helen Samara Dos Santos}
\address{Department of Mathematics and Statistics, Memorial University of Newfoundland, St. John's, NL, A1C5S7, Canada.}
\email[H.S.~Dos Santos]{helensds@mun.ca}
\author{Felipe Yukihide Yasumura}
\address{Department of Mathematics, Instituto de Matem\'atica e Estat\'istica, Universidade de S\~ao Paulo, SP, Brazil.}
\email[F.~Yasumura]{fyyasumura@ime.usp.br}
\thanks{The second named author is supported by Fapesp, grants no.~2023/03922-8 and no.~2018/23690-6}
\subjclass{16W50}
\keywords{Graded algebras; Graded modules; Incidence algebras.}
\title{Group gradings on finite-dimensional incidence algebras. II}
\newtheorem{Thm}{Theorem}[section]
\newtheorem{Prop}[Thm]{Proposition}
\newtheorem{Lemma}[Thm]{Lemma}
\newtheorem{Cor}[Thm]{Corollary}
\theoremstyle{definition}
\newtheorem{Def}[Thm]{Definition}
\theoremstyle{remark}
\newtheorem{Remark}[Thm]{Remark}
\newcommand{\YD}[2]{{}_{\mathbb{F}#1}\mathcal{YD}^{\mathbb{F}#2}}
\newcommand{\up}{\!\uparrow}
\begin{document}
\begin{abstract}
We complete the description of group gradings on finite-dimensional incidence algebras. Moreover, we classify the finite-dimensional graded algebras that can be realized as incidence algebras endowed with a group grading.
\end{abstract}
\maketitle

\section{Introduction}
The classification of gradings by arbitrary groups on a given algebra has been a topic of significant interest, particularly after the groundbreaking works by Patera and Zassenhaus \cite{PZ} and Bahturin, Sehgal, and Zaicev \cite{BSZ}. Such classifications have been obtained for many important classes of algebras, including the simple finite-dimensional associative, Lie, and Jordan algebras (see the monograph \cite{EK13} and the references therein).

Advancements in the classification of group gradings on non-simple algebras have also been made, as seen in the articles \cite{BesDas, DGK, KY}. In particular, the complete classification of isomorphism classes of group gradings on the algebra of upper triangular matrices is given in the works \cite{VinKoVa2004, VaZa2007}. As a generalization of these algebras, the description of group gradings on incidence algebras is provided in the paper \cite{SSY}, and their isomorphism classes are described when the grading group is abelian. Their main result states that every group grading on a finite-dimensional incidence algebra is graded isomorphic to a graded upper triangular algebra. Although the authors characterize the structure of graded bimodules, there is no description of the algebra structure of the upper triangular algebra.

In this paper, we complete the work initiated in \cite{SSY} and, in particular, answer the last question of the paper. We provide a complete description of group gradings on a given incidence algebra. Moreover, we classify the finite-dimensional graded algebras that can be realized as an incidence algebra endowed with a group grading. As a byproduct, we investigate the structure of graded bimodules and elucidate a connection between Yetter-Drinfel'd modules and graded bimodules.

This paper is organized as follows. In \Cref{S_Preliminaries}, we give the basic definitions concerning group gradings, incidence algebras, and the description of group gradings on incidence algebras given in \cite{SSY}. In \Cref{S_grBimodules}, we investigate the structure of graded bimodules, proposing a new approach to the theory. The following section is devoted to classifying the graded triangular algebras that can be realized as an incidence algebra endowed with a group grading (\Cref{realization}). An explicit construction of the poset is also provided (\Cref{construction}). The last section is dedicated to studying the product of bimodules that occurs in an incidence algebra. We prove that the characters appearing in a product of bimodules should be extensions of the product of the characters (\Cref{mainresult}). Finally, in the last section, we summarize the results obtained, characterizing the graded algebras that can be realized as graded incidence algebras (\Cref{realizationgradedalgebra}).

\section{Preliminaries\label{S_Preliminaries}}
\subsection{Group gradings} 
Let $G$ be a group and $\mathcal{A}$ an $\mathbb{F}$-algebra. We will use the multiplicative notation for the group $G$ and denote its neutral element by $1$. A \emph{$G$-grading} on $\mathcal{A}$ is a vector space decomposition
$$
\mathcal{A}=\bigoplus_{g\in G}\mathcal{A}_g,
$$
such that $\mathcal{A}_g\mathcal{A}_h\subseteq\mathcal{A}_{gh}$ for all $g, h \in G$. If $\mathcal{A}$ has a fixed $G$-grading, we say that $\mathcal{A}$ is $G$-graded. The component $\mathcal{A}_g$ is called the homogeneous component of degree $g$, and its nonzero elements are said to be \emph{homogeneous} of degree $g$. Given $x\in\mathcal{A}_g$ with $x \ne 0$, we denote $\deg_G x=g$. A subspace $\mathcal{S}\subseteq\mathcal{A}$ is called \emph{graded} if $\mathcal{S}=
\bigoplus_{g\in G} \mathcal{S}\cap\mathcal{A}_g$. A graded subalgebra (or ideal) is a subalgebra (or ideal) that is a graded subspace. The \emph{support} of the graded algebra $\mathcal{A}$ is $\mathrm{Supp}\, \mathcal{A}=\{g\in G\mid\mathcal{A}_g\ne0\}$.

If $\mathcal{B}$ is another $G$-graded algebra, then a \emph{homomorphism of $G$-graded algebras} is a homomorphism of algebras $\varphi:\mathcal{A}\to\mathcal{B}$ such that $\varphi(\mathcal{A}_g)\subseteq\mathcal{B}_g$ for all $g\in G$. If $\varphi$ is an algebra isomorphism, then $\mathcal{A}$ and $\mathcal{B}$ are said to be \emph{$G$-graded isomorphic}, denoted by $\mathcal{A}\cong_G\mathcal{B}$. A complete reference on the subject of graded algebras is the monograph \cite{EK13}.

Let $\mathcal{A}$ and $\mathcal{B}$ be $G$-graded algebras. Let $M$ be an $(\mathcal{A},\mathcal{B})$-bimodule, and assume that $M$ has a $G$-grading, say $M =\bigoplus_{g\in G} M_g$. We say that $M$ is a \emph{$G$-graded $(\mathcal{A},\mathcal{B})$-bimodule} if $\mathcal{A}_h M_g \mathcal{B}_k \subseteq M_{hgk}$ for all $h, g, k \in G$. If $N$ is another $G$-graded $(\mathcal{A},\mathcal{B})$-bimodule, a \emph{$G$-graded homomorphism of bimodules} is a bimodule homomorphism $f:M\to N$ such that $f(M_g)\subseteq N_g$ for all $g\in G$. If $f$ is bijective, we say that $M$ and $N$ are \emph{$G$-graded isomorphic} and denote $M \cong_G N$.
    
A \emph{graded triangular algebra} is an algebra of the form
\[
\left(\begin{array}{cc}\mathcal{A}&M\\&\mathcal{B}\end{array}\right)
\]
where $\mathcal{A}$ and $\mathcal{B}$ are $G$-graded algebras, and $M$ is a $G$-graded $(\mathcal{A},\mathcal{B})$-bimodule.

\subsection{Incidence algebras} We provide the definition of an incidence algebra over a field $\mathbb{F}$. Let $(X,\le)$ be any partially ordered set (poset, for short). Assume that $(X,\le)$ is \emph{locally finite}, i.e., for all $x,y\in X$, there exists a finite number of $z\in X$ such that $x\le z\le y$. Define $I(X)=\{f:X\times X\to \mathbb{F}\mid f(x,y)=0,\forall x\not\le y\}$. Then $I(X)$ has a natural sum (point-wise sum) and natural scalar multiplication, which give $I(X)$ the structure of an $\mathbb{F}$-vector space. For $f,g\in I(X)$, we define $h=f\cdot g$ as the function $h$ such that $h(x,y)=\sum_{z\in X} f(x,z)g(z,y)$. Note that the only possibly nonzero elements in the previous sum are the $z\in X$ such that $x\le z\le y$; hence, since $X$ is locally finite, the sum is well-defined. So $h\in I(X)$. It is straightforward to prove that $I(X)$, with the defined operations, is an associative algebra. The algebra $I(X)$ is called an \emph{incidence algebra}.

Given $x\le y$, we let $e_{xy}$ be the element such that
\[
e_{x,y}(w,z)=\left\{\begin{array}{ll}1,&\text{ if $x=w$ and $y=z$},\\0,&\text{ otherwise}\end{array}\right.
\]
Note that $e_{xy}e_{wz}=\delta_{yw}e_{xz}$. We will denote $e_x:=e_{xx}$.

Note that the defined multiplication on $I(X)$ is similar to the product of matrices. Moreover, if $X$ is totally ordered and contains $n$ elements, then $I(X)\cong\mathrm{UT}_n$, the algebra of upper triangular matrices. In connection, if $(X,\le_X)$ is arbitrary (not necessarily totally ordered) and finite with $n$ elements, then we can rename the elements of $X=\{1,2,\ldots,n\}$ in such a way that $i\le_X j$ implies $i\le j$ in the usual ordering of the integers. With this identification, we see that $I(X)\subset\mathrm{UT}_n$ is a subalgebra. Thus, finite-dimensional incidence algebras are subalgebras of $\mathrm{UT}_n$ containing all the diagonal matrices. As we are interested in finite-dimensional incidence algebras, we will assume from now on that $I(X)\subset\mathrm{UT}_n$.

The incidence algebras are very interesting on their own, and moreover, they are related to other branches of Mathematics. They also give rise to interesting and challenging combinatorial problems. For an extensive theory on incidence algebras, see, for instance, the book \cite{SpieDon}.

\subsection{Group gradings on incidence algebras} In this subsection, we recall the main results of \cite{SSY}. It describes the group gradings on finite-dimensional incidence algebras. More precisely, they prove:

\begin{Thm}[{\cite[Theorem1]{SSY}}]
Let $\mathbb{F}$ be a field, $X$ a finite poset, and let $I(X)$ be endowed with a $G$-grading. Assume at least one of the following conditions: $\mathrm{char}\,\mathbb{F}=0$, $\mathrm{char}\,\mathbb{F}>\dim I(X)$, or $G$ is abelian. Then, up to a graded isomorphism, there exist finite abelian subgroups $H_1,\ldots,H_t\subseteq G$, such that: for each $i=1,\ldots,t$, $\mathrm{char}\,\mathbb{F}$ does not divide $|H_i|$ and $\mathbb{F}$ contains a primitive $\mathrm{exp}\,H_i$-root of $1$, and
\[
I(X)\cong_G\left(\begin{array}{cccc}\mathbb{F}H_1&M_{1,2}&\ldots&M_{1,t}\\&\mathbb{F}H_2&\ddots&\vdots\\&&\ddots&M_{t-1,t}\\&&&\mathbb{F}H_t\end{array}\right),
\]
where each $M_{i,j}$ is a $G$-graded $(\mathbb{F}H_i,\mathbb{F}H_j)$-bimodule.
\end{Thm}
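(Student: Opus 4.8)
The plan is to put the grading into Peirce form with respect to a well‑chosen system of homogeneous idempotents, and then to identify the blocks. Write $J=\mathrm{rad}\,I(X)$; it is spanned by the $e_{xy}$ with $x<y$ and is a graded ideal (the Jacobson radical of a finite‑dimensional graded algebra is graded), so the induced $G$-grading on the commutative semisimple algebra $\bar A:=I(X)/J\cong\mathbb F^n$ makes sense. Since every nonzero homogeneous element of a graded commutative semisimple algebra is invertible, $\bar A$ splits, along the primitive idempotents $\bar f_1,\dots,\bar f_t$ of its identity component $\bar A_1$ (a subalgebra of $\mathbb F^n$, hence $\cong\mathbb F^t$), as a graded product $\bar A=\bar A^{(1)}\times\cdots\times\bar A^{(t)}$ with $\bar A^{(i)}=\bar f_i\bar A$ a graded ideal in which every nonzero homogeneous element is invertible and whose identity component $\mathbb F\bar f_i$ is one‑dimensional; such an algebra is a twisted group algebra $\mathbb F^{\sigma_i}H_i$ with $H_i\le G$ finite abelian and $\dim\bar A^{(i)}=|H_i|$. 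Using the hypotheses on $\mathbb F$ and $G$ together with the fact that $\bar A^{(i)}$ is split (a direct factor of $\mathbb F^n$), one obtains $\mathrm{char}\,\mathbb F\nmid|H_i|$, that $\mathbb F$ has a primitive $\exp H_i$-root of $1$, and that $[\sigma_i]$ is trivial; thus $\bar A^{(i)}\cong_G\mathbb F H_i$ with its canonical grading.

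Next, lift $\bar f_1,\dots,\bar f_t$ to orthogonal homogeneous degree-$1$ idempotents $f_i\in I(X)_1$ with $\sum_i f_i=1$ (graded idempotent lifting modulo the nilpotent $J$), and write $\bar f_i=\sum_{x\in S_i}\bar e_x$ for a partition $\{S_1,\dots,S_t\}$ of $X$. Since any two complete sets of orthogonal idempotents lifting the same set are conjugate by a unit $u\equiv 1\pmod J$, and conjugation by such a $u$ is a graded isomorphism from our grading onto a transported one, we may replace the grading and assume $f_i=g_i:=\sum_{x\in S_i}e_x$. Now the $g_i$ are homogeneous of degree $1$, so $I(X)=\bigoplus_{i,j}g_iI(X)g_j$ is a grading‑compatible Peirce decomposition; here each $g_iI(X)g_i=I(S_i)$ is the incidence algebra of the induced subposet, carrying a $G$-grading whose identity component is local (because $g_i$ is primitive in $I(X)_1$), while $g_iI(X)g_j=\bigoplus_{x\in S_i,\,y\in S_j,\,x\le y}\mathbb F e_{xy}$ for $i\ne j$.

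The heart of the argument is the claim that a $G$-grading on an incidence algebra $I(Y)$ with local identity component forces $Y$ to be an antichain. To prove it, use the graded Wedderburn–Malcev theorem (valid under the stated hypotheses), which provides a graded maximal semisimple subalgebra, together with conjugacy of such subalgebras by units $\equiv 1$ modulo the radical, to reduce as above to the case where the diagonal $D=\bigoplus_{y\in Y}\mathbb F e_y$ is a graded subalgebra; then $D\cong_G\mathbb F H$ via characters $\{c_y\}_{y\in Y}=\widehat H$, and $D$ contains a homogeneous unit $d_h$ of degree $h$ for each $h\in H$. Since $R:=\mathrm{rad}\,I(Y)$ is a graded $D$-bimodule on which each $d_h$ acts invertibly, left multiplication by $d_h$ is a bijection $R_g\to R_{hg}$, so $\mathrm{Supp}\,R$ is a union of right $H$-cosets and $R$ is, over those cosets, a direct sum of free left $\mathbb F H$-modules; hence every character of $H$ occurs with one and the same multiplicity in $R$. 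But as a left $D$-module $R\cong\bigoplus_{y\in Y}(\mathbb F_{c_y})^{u_y}$, where $u_y=|\{z\in Y:y<z\}|$ and $y\mapsto c_y$ is a bijection onto $\widehat H$; equality of multiplicities forces all $u_y$ equal, which is impossible for a nonzero radical since a maximal element of $Y$ has $u_y=0$. Thus $R=0$, and applied to each $I(S_i)$ this shows every $S_i$ is an antichain and $g_iI(X)g_i=\bar A^{(i)}\cong_G\mathbb F H_i$.

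Finally, the same free‑module argument applied to the graded $(\mathbb F H_i,\mathbb F H_j)$-bimodule $M_{ij}:=g_iI(X)g_j$ — using the homogeneous units of $D_i\subseteq g_iI(X)g_i$, which act invertibly on $M_{ij}$ — shows that if $M_{ij}\ne 0$ then every $x\in S_i$ has a successor in $S_j$. Hence a directed cycle $i_1\to i_2\to\cdots\to i_l\to i_1$ of indices with $M_{i_mi_{m+1}}\ne0$ would, starting from any $z_1\in S_{i_1}$, yield a strictly increasing chain $z_1<z_2<\cdots<z_{l+1}$ with $z_1,z_{l+1}\in S_{i_1}$, contradicting that $S_{i_1}$ is an antichain; so the digraph on $\{1,\dots,t\}$ with an edge $i\to j$ whenever $i\ne j$ and $M_{ij}\ne0$ is acyclic. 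Reordering the $S_i$ along a topological sort makes the Peirce decomposition upper triangular, giving $I(X)\cong_G$ the displayed block upper triangular algebra with diagonal blocks $\mathbb F H_i$ and off‑diagonal graded $(\mathbb F H_i,\mathbb F H_j)$-bimodules $M_{ij}$. I expect the main obstacle to be the crux step — in particular securing a graded Levi subalgebra under the given (not necessarily characteristic‑zero) hypotheses and nailing down the triviality of the cocycles $\sigma_i$ — which is precisely where the assumptions ``$\mathrm{char}\,\mathbb F=0$, $\mathrm{char}\,\mathbb F>\dim I(X)$, or $G$ abelian'' and ``$\mathbb F$ large enough'' are genuinely needed.
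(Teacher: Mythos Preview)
The paper does not contain a proof of this theorem. It is stated in \S2.3 as a preliminary result, with an explicit citation to \cite[Theorem~1]{SSY}; the present paper takes it as input and builds on it. Consequently there is no ``paper's own proof'' to compare your proposal against.

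That said, your sketch is a plausible reconstruction of the argument from \cite{SSY} and is essentially on the right track: reduce modulo the graded radical, decompose the commutative semisimple quotient into graded-simple pieces indexed by the primitive idempotents of the neutral component, identify each piece as a group algebra $\mathbb{F}H_i$, lift idempotents, conjugate to make the standard block idempotents homogeneous, and then use a free-module/character-multiplicity count to force each block $S_i$ to be an antichain and to rule out cycles among the blocks. Two remarks are in order. First, the sentence ``every nonzero homogeneous element of a graded commutative semisimple algebra is invertible'' is false as written (take $\mathbb{F}\times\mathbb{F}$ with trivial grading); what you need, and what your argument actually uses, is that each $\bar f_i\bar A$ is graded-simple (it has one-dimensional neutral component and is reduced, so any nonzero graded ideal must meet degree $1$), hence a graded field, hence a twisted group algebra---and commutativity then forces the cocycle to be symmetric, so trivial. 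Second, you correctly flag that the delicate point is the graded Wedderburn--Malcev/Levi step under the hypothesis ``$G$ abelian'' in arbitrary characteristic; this is indeed where \cite{SSY} does real work, and your sketch does not resolve it beyond naming it. Apart from these points, the outline is sound.
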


An isomorphism condition of group gradings is also provided \cite[Proposition 31]{SSY}, and a complete answer is given when the grading group is finite (\cite[Theorem 34]{SSY}). Moreover, the authors provide a description of the graded bimodules (see the section below as well). However, they do not describe the algebra structure on the triangular algebra, nor do they specify which triangular algebras can be realized as incidence algebras endowed with a group grading. In this paper, we answer these questions.

\section{Graded Bimodules\label{S_grBimodules}}
In this section, we investigate graded bimodules. The results presented here are known (see \cite[Section 4]{SSY} and also \cite{FS}). However, we propose a new approach that may shed light on the case where the grading group is non-abelian.

\begin{Def}
Let $G$ be an arbitrary group, $H \subseteq G$ a finite subgroup, $\mathcal{V}$ a $\mathbb{F}$-vector space, $\rho:H\to\mathrm{GL}(\mathcal{V})$ a representation of $H$ on $\mathcal{V}$, and $\Gamma:\mathcal{V}=\bigoplus_{g\in G}\mathcal{V}_g$ a $G$-grading on $\mathcal{V}$. We say that $\rho$ and $\Gamma$ are compatible if $\rho(h)(\mathcal{V}_g)\subseteq\mathcal{V}_{hgh^{-1}}$ for all $h\in H$ and $g\in G$.
\end{Def}

\begin{Remark}
The previous definition is equivalent to the following. The vector space $\mathcal{V}$ is a left $\mathbb{F}H$-module and a right $\mathbb{F}G$-comodule, and these structures satisfy the compatibility condition of a Yetter-Drinfel'd module. Denote by $\YD{H}{G}$ the category in which the objects are Yetter-Drinfel'd left $\mathbb{F}H$ modules and right $\mathbb{F}G$-comodules, and the morphisms are homomorphisms of $\mathbb{F}H$-modules and $\mathbb{F}G$-comodules. Therefore, from now on, we will simply say that $\mathcal{V}\in\YD{H}{G}$ instead of saying that $\mathcal{V}$ has a pair of compatible $G$-grading and representation of $H$.
\end{Remark}

The following results are known:
\begin{Lemma}\label{gradedMashcke}
Let $\mathcal{V}\in\YD{H}{G}$. Then:
\begin{enumerate}
\renewcommand{\labelenumi}{(\roman{enumi})}
\item $\mathcal{V}$ is completely reducible.
\item For any $g\in\mathrm{Supp}\,\mathcal{V}$, the restriction $\rho_g:C_H(g)\to\mathrm{GL}(\mathcal{V}_g)$ is a representation and $\mathcal{V}=\mathrm{Ind}_{C_H(g)}^H\rho_g$.
\item $\mathcal{V}$ is irreducible (as Yetter-Drinfel'd module) if, and only if, $\rho_g$ is irreducible.
\end{enumerate}
\end{Lemma}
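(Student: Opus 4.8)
The plan is to carry the classical Maschke and Clifford-theory arguments over to the Yetter-Drinfel'd setting, the only additional bookkeeping being that every map produced must be checked to respect the $G$-grading, which is exactly where the compatibility $\rho(h)\mathcal{V}_g\subseteq\mathcal{V}_{hgh^{-1}}$ is used. I will use the standing hypothesis $\mathrm{char}\,\mathbb{F}\nmid|H|$ (needed already for an averaging argument), and for parts (ii)--(iii) I will first reduce, via (i), to the case where $\mathcal{V}$ is irreducible.

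For (i): given a Yetter-Drinfel'd submodule $\mathcal{W}\subseteq\mathcal{V}$, since $\mathcal{W}$ is graded one can pick an $\mathbb{F}$-linear projection $\pi:\mathcal{V}\to\mathcal{W}$ homogeneous of degree $1$ (split each $\mathcal{V}_g=\mathcal{W}_g\oplus\mathcal{U}_g$ and project along $\bigoplus_g\mathcal{U}_g$). Put $\tilde\pi=|H|^{-1}\sum_{h\in H}\rho(h)\pi\rho(h)^{-1}$. As classically, $\tilde\pi$ is $H$-equivariant and restricts to the identity on $\mathcal{W}$, hence is a projection onto $\mathcal{W}$; and it is homogeneous of degree $1$, since for $v\in\mathcal{V}_g$ one has $\rho(h)^{-1}v\in\mathcal{V}_{h^{-1}gh}$, then $\pi\rho(h)^{-1}v\in\mathcal{W}_{h^{-1}gh}$, then $\rho(h)\pi\rho(h)^{-1}v\in\mathcal{V}_g$. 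Thus $\tilde\pi$ is a morphism in $\YD{H}{G}$ and $\ker\tilde\pi$ is a graded $H$-invariant complement of $\mathcal{W}$; a standard induction on $\dim\mathcal{V}$ then yields complete reducibility.

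For (ii): if $g\in\mathrm{Supp}\,\mathcal{V}$ and $h\in C_H(g)$, then $\rho(h)\mathcal{V}_g\subseteq\mathcal{V}_{hgh^{-1}}=\mathcal{V}_g$, so $\mathcal{V}_g$ is $C_H(g)$-stable and $\rho_g:C_H(g)\to\mathrm{GL}(\mathcal{V}_g)$ is a representation. Choose coset representatives $h_1=1,h_2,\dots,h_r$ for $H/C_H(g)$ and grade $\mathrm{Ind}_{C_H(g)}^H\rho_g=\bigoplus_i h_i\otimes\mathcal{V}_g$ by declaring $h_i\otimes\mathcal{V}_g$ homogeneous of degree $h_igh_i^{-1}$ (well defined, since this degree depends only on the coset); together with the left $H$-action this is an object of $\YD{H}{G}$. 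The natural map $\Phi:\mathrm{Ind}_{C_H(g)}^H\rho_g\to\mathcal{V}$, $h_i\otimes v\mapsto\rho(h_i)v$, is well defined, $H$-equivariant and graded. The degrees $h_igh_i^{-1}$ are pairwise distinct, so $\sum_i\rho(h_i)\mathcal{V}_g$ is an internal direct sum of $r$ subspaces each of dimension $\dim\mathcal{V}_g$; hence $\Phi$ is injective by a dimension count, and surjective because its image $\mathbb{F}H\cdot\mathcal{V}_g$ is a nonzero graded $H$-submodule of the irreducible $\mathcal{V}$. Thus $\Phi$ is an isomorphism in $\YD{H}{G}$.

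For (iii): by (ii) we have $\mathcal{V}\cong\mathrm{Ind}_{C_H(g)}^H\rho_g$, so it suffices to observe that $\mathcal{W}\mapsto\mathcal{W}_g$ is an inclusion-preserving bijection from the Yetter-Drinfel'd submodules of $\mathrm{Ind}_{C_H(g)}^H\rho_g$ onto the $C_H(g)$-submodules of $\mathcal{V}_g$. Indeed, $\mathcal{W}_g$ is $C_H(g)$-stable because $C_H(g)$ fixes the degree $g$; conversely, for a $C_H(g)$-submodule $\mathcal{U}\subseteq\mathcal{V}_g$ the subspace $\sum_i\rho(h_i)\mathcal{U}$ is $H$-stable (using $hh_i\in h_jC_H(g)$ and $\rho(c)\mathcal{U}=\mathcal{U}$ for $c\in C_H(g)$), graded, and meets $\mathcal{V}_g$ exactly in $\mathcal{U}$; and any Yetter-Drinfel'd submodule $\mathcal{W}$ satisfies $\mathcal{W}_{h_igh_i^{-1}}=\rho(h_i)\mathcal{W}_g$, so it is recovered from $\mathcal{W}_g$. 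Hence $\mathcal{V}$ has a proper nonzero Yetter-Drinfel'd submodule iff $\mathcal{V}_g$ has a proper nonzero $C_H(g)$-submodule, which is (iii). There is no deep obstacle: the whole proof is the classical Maschke--Clifford package, with each construction re-checked for compatibility with the $G$-grading through $\rho(h)\mathcal{V}_g\subseteq\mathcal{V}_{hgh^{-1}}$. The step I expect to require the most care is (ii) --- equipping $\mathrm{Ind}_{C_H(g)}^H\rho_g$ with the correct $G$-grading and checking that $\Phi$ is at once $H$-equivariant, graded, and bijective --- together with the preliminary point that (ii)--(iii) become correct statements only after one reduces, using (i), to irreducible $\mathcal{V}$.
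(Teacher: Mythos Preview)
The paper does not supply a proof of this lemma: it is introduced with ``The following results are known'' and stated without argument, the surrounding text pointing to \cite[Section~4]{SSY} and \cite{FS} for the relevant material. So there is no paper-proof to compare against.

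Your argument is correct and is exactly the expected one: a graded Maschke averaging for (i), then Clifford theory for (ii)--(iii), with each construction checked against the compatibility $\rho(h)\mathcal{V}_g\subseteq\mathcal{V}_{hgh^{-1}}$. Two small comments. First, your explicit reduction of (ii)--(iii) to the irreducible case is well taken: as written, the equality $\mathcal{V}=\mathrm{Ind}_{C_H(g)}^H\rho_g$ fails for a general $\mathcal{V}$ (take a direct sum of two irreducibles whose supports are disjoint $H$-conjugacy classes), and the paper indeed only invokes (ii)--(iii) for irreducible $\mathcal{V}$ (see the proof of \Cref{irreducible}). Second, you correctly flag the standing hypothesis $\mathrm{char}\,\mathbb{F}\nmid|H|$, which the paper assumes globally (cf.\ condition~(1) in \Cref{realizationgradedalgebra}) but does not restate in the lemma.
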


Given two subgroups $H_1$, $H_2$, let $H_{12}=H_1\cap H_2$. Then, $\mathbb{F}(H_1\times H_2)$ is a right $\mathbb{F}H_{12}$-module via $(h_1,h_2)\cdot h=(h_1h,h^{-1}h_2)$. The Yetter-Drinfel'd modules relate to graded bimodules in the following way.

\begin{Prop}\label{construction_bimodule}
Let $G$ be a finite group, $H_1$, $H_2\subseteq G$ be finite groups, $H_{12}=H_1\cap H_2$ and $\mathcal{V}\in\YD{H_{12}}{G}$. Then
$$
\mathcal{V}\up:=\mathbb{F}(H_1\times H_2)\otimes_{\mathbb{F}H_{12}}\mathcal{V}
$$
is a $G$-graded $(\mathbb{F}H_1,\mathbb{F}H_2)$-bimodule via:
\begin{enumerate}
\item $h\cdot((h_1,h_2)\otimes v)\cdot k=(hh_1,h_2k)\otimes v$, $h\in H_1$, $k\in H_2$, $(h_1,h_2)\otimes v\in\mathcal{V}\up$.
\item $\deg((h_1,h_2)\otimes v)=h_1(\deg v)h_2$.
\end{enumerate}
\end{Prop}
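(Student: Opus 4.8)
The plan is to verify directly that the two prescribed structures — the bimodule action (1) and the grading (2) — are well-defined on the quotient $\mathcal{V}\up=\mathbb{F}(H_1\times H_2)\otimes_{\mathbb{F}H_{12}}\mathcal{V}$ and are mutually compatible in the sense required of a $G$-graded $(\mathbb{F}H_1,\mathbb{F}H_2)$-bimodule. First I would check that both the action and the degree function descend to the balanced tensor product. For the action this is the statement that, for $h\in H_1$, $k\in H_2$, $a\in H_{12}$ and $v\in\mathcal{V}$, the two expressions obtained from $(h_1a,h_2)\otimes v$ and $(h_1,h_2)\otimes av$ — equivalently from $(h_1,a^{-1}h_2)\otimes v$ via the module structure $(h_1,h_2)\cdot a=(h_1a,a^{-1}h_2)$ — agree after applying $h\cdot(-)\cdot k$; this is immediate since the $H_1$- and $H_2$-multiplications in (1) commute with right multiplication by $H_{12}$ on the middle factor. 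The associativity and unitality of the $(\mathbb{F}H_1,\mathbb{F}H_2)$-action are then formal: $h'\cdot(h\cdot((h_1,h_2)\otimes v)\cdot k)\cdot k'=(h'hh_1,h_2kk')\otimes v$ by associativity in $H_1$ and $H_2$, and $1\cdot((h_1,h_2)\otimes v)\cdot 1=(h_1,h_2)\otimes v$.

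The genuinely substantive point — and the one I expect to be the main obstacle — is that the degree function in (2) is well-defined on the quotient, and for this the Yetter–Drinfel'd compatibility of $\mathcal{V}\in\YD{H_{12}}{G}$ is exactly what is needed. Concretely, one must show $\deg\big((h_1a,h_2)\otimes v\big)=\deg\big((h_1,a^{-1}h_2)\otimes av\big)$ for $a\in H_{12}$ and $v\in\mathcal{V}$ homogeneous. The left side is $h_1a(\deg v)h_2$; the right side is $h_1a^{-1}(\deg(av))a^{-1}h_2$, wait — more carefully, using $(h_1,h_2)\cdot a=(h_1a,a^{-1}h_2)$, the relation is between $(h_1a,a^{-1}h_2)\otimes v$ and $(h_1,h_2)\otimes av$, so I must check $h_1a(\deg v)a^{-1}h_2=h_1(\deg(av))h_2$, i.e. $a(\deg v)a^{-1}=\deg(av)$. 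But $a\in H_{12}$ acts on $\mathcal{V}$ by $\rho(a)$, and compatibility says precisely $\rho(a)(\mathcal{V}_g)\subseteq\mathcal{V}_{aga^{-1}}$, so $\deg(\rho(a)v)=a(\deg v)a^{-1}$ whenever $v$ is homogeneous. Hence the degree function is well-defined, and $\mathcal{V}\up=\bigoplus_{g\in G}(\mathcal{V}\up)_g$ where $(\mathcal{V}\up)_g$ is spanned by the $(h_1,h_2)\otimes v$ with $h_1(\deg v)h_2=g$; that this is a direct-sum decomposition follows by choosing coset representatives for $H_{12}$ in $H_1\times H_2$ and noting $\mathcal{V}\up\cong\bigoplus \mathcal{V}$ as vector spaces over such representatives, each copy inheriting its grading from $\mathcal{V}$.

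Finally I would verify the grading is compatible with the bimodule action, i.e. $\mathbb{F}H_1{}_{h}\cdot(\mathcal{V}\up)_g\cdot\mathbb{F}H_2{}_{k}\subseteq(\mathcal{V}\up)_{hgk}$ for all $h,k,g\in G$. Since $H_i$ sits inside $G$ with its trivial grading concentrated in its own elements, this reduces to: if $\deg((h_1,h_2)\otimes v)=g$, i.e. $h_1(\deg v)h_2=g$, and $h\in H_1$, $k\in H_2$, then $\deg\big((hh_1,h_2k)\otimes v\big)=hh_1(\deg v)h_2k=h\,\big(h_1(\deg v)h_2\big)\,k=hgk$, which is exactly what (2) gives. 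This completes the verification that $\mathcal{V}\up$ is a $G$-graded $(\mathbb{F}H_1,\mathbb{F}H_2)$-bimodule. The only place where anything beyond bookkeeping enters is the well-definedness of the grading on the quotient, which is the hinge where the Yetter–Drinfel'd axiom is consumed; everything else is a routine check in the group and in the tensor product over $\mathbb{F}H_{12}$.
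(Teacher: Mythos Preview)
Your proposal is correct and follows essentially the same approach as the paper: both arguments verify that the left/right actions descend to the balanced tensor product and then check that the degree map is well-defined on the quotient via the Yetter--Drinfel'd compatibility $\deg(\rho(a)v)=a(\deg v)a^{-1}$, with the remaining graded-bimodule axioms being routine. Your write-up is in fact slightly more explicit than the paper's (you spell out associativity, the direct-sum decomposition, and the grading compatibility with the action, all of which the paper dismisses as ``clear''), but the substance and the key step are identical.
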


\begin{proof}
Given $h\in H_1$, define
\[
L_h:\mathbb{F}(H_1\times H_2)\times\mathcal{V}\to\mathcal{V}\up
\]
via $L_h((h_1,h_2),v)=(hh_1,h_2)\otimes v$. We need to prove that $L_h$ is $\mathbb{F}H_{12}$-balanced. Given $t\in H_{12}$, one has
\begin{align*}
L_h((h_1,h_2)\cdot t,v)&=L_h((h_1t,t^{-1}h_2),v)=(hh_1t,t^{-1}h_2)\otimes v=(hh_1,h_2)\otimes t\cdot v\\&=L_h((h_1,h_2),t\cdot v).
\end{align*}
Thus, we obtain a left multiplication by $h$, $L_h:\mathcal{V}\up\to\mathcal{V}\up$ as in the statement. Similarly, we obtain a right multiplication by $k\in H_2$. It is clear that the defined operations give a structure of bimodule on $\mathcal{V}\up$.

We will prove that the given degree is well-defined. For this, it is enough to show that $\deg(((h_1,h_2)\cdot h)\otimes v))=\deg((h_1,h_2)\otimes h\cdot v)$. The first one equals $\deg((h_1h,h^{-1}h_2)\otimes v)=h_1h(\deg v)h^{-1}h_2$. On the other hand, the second one equals $h_1(\deg(h\cdot v))h_2=h_1h(\deg v)h^{-1}h_2$. Hence, the degree is well-defined. From the definition of the bimodule action, it is clear that $\mathcal{V}\up$ is a graded bimodule.
\end{proof}

Note that we can identify $\mathcal{V}=(1,1)\otimes\mathcal{V}\subseteq\mathcal{V}\up$. Moreover, we can recover the structure of $\mathbb{F}H_{12}$-module in terms of the bimodule structure: given $h\in H_{12}$, one has $h\cdot v=h\cdot((1,1)\otimes v)\cdot h^{-1}$.

It would be interesting to prove that $\mathcal{V}\up$ is irreducible whenever $\mathcal{V}$ is irreducible. We can prove this fact if we impose some extra conditions. The converse is trivially valid.

\begin{Lemma}\label{lem3}
Let $G$ be a group, $H_1$, $H_2\subseteq G$ finite subgroups, and $\mathcal{V}$, $\mathcal{W}\in\YD{H_{12}}{G}$. Then
\begin{enumerate}
\renewcommand{\labelenumi}{(\roman{enumi})}
\item $(\mathcal{V}\oplus\mathcal{W})\up=\mathcal{V}\up\oplus\mathcal{W}\up$.
\item If $\mathcal{V}\up$ is irreducible, then so is $\mathcal{V}$.
\item Any morphism $\mathcal{V}\to\mathcal{W}$ extends to a unique $G$-graded bimodule homomorphism $\mathcal{V}\up\to\mathcal{W}\up$.
\end{enumerate}
\end{Lemma}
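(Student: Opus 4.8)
The plan is to regard $\mathcal{V}\mapsto\mathcal{V}\up$ as an induction functor along the inclusion $\mathbb{F}H_{12}\hookrightarrow\mathbb{F}(H_1\times H_2)$, so that parts (i) and (iii) come out formally, while the content of (ii) is the freeness of $\mathbb{F}(H_1\times H_2)$ as a right $\mathbb{F}H_{12}$-module; I would prove them in the order (i), (iii), (ii). For (i): tensoring over $\mathbb{F}H_{12}$ commutes with finite direct sums, so $(h_1,h_2)\otimes(v,w)\mapsto\big((h_1,h_2)\otimes v,\,(h_1,h_2)\otimes w\big)$ is a linear isomorphism $(\mathcal{V}\oplus\mathcal{W})\up\to\mathcal{V}\up\oplus\mathcal{W}\up$; it intertwines the $H_1$- and $H_2$-actions because formula~(1) of \Cref{construction_bimodule} only touches the $\mathbb{F}(H_1\times H_2)$-component, and it preserves degrees because a homogeneous element of $\mathcal{V}\oplus\mathcal{W}$ of degree $g$ is a pair $(v,w)$ with $v\in\mathcal{V}_g$ and $w\in\mathcal{W}_g$, so by formula~(2) all three tensors have degree $h_1 g h_2$.

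For (iii): given a morphism $f\colon\mathcal{V}\to\mathcal{W}$ in $\YD{H_{12}}{G}$, the bilinear map $((h_1,h_2),v)\mapsto(h_1,h_2)\otimes f(v)$ is $\mathbb{F}H_{12}$-balanced because $f$ is $\mathbb{F}H_{12}$-linear, hence it induces a map $f\up\colon\mathcal{V}\up\to\mathcal{W}\up$; formula~(1) shows $f\up$ is a homomorphism of bimodules, formula~(2) together with $\deg f(v)=\deg v$ shows it preserves degrees, and $f\up$ clearly restricts to $f$ on $(1,1)\otimes\mathcal{V}$. For uniqueness, use the remark after \Cref{construction_bimodule}: $\mathcal{V}\up$ is spanned by the elements $(h_1,h_2)\otimes v=h_1\cdot\big((1,1)\otimes v\big)\cdot h_2$, so any graded bimodule homomorphism agreeing with $f$ on $(1,1)\otimes\mathcal{V}$ is forced to send $(h_1,h_2)\otimes v$ to $h_1\cdot\big((1,1)\otimes f(v)\big)\cdot h_2=(h_1,h_2)\otimes f(v)$, i.e.\ it equals $f\up$.

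For (ii): I would first check that $\mathbb{F}(H_1\times H_2)$ is a \emph{free} right $\mathbb{F}H_{12}$-module for the action $(h_1,h_2)\cdot h=(h_1h,h^{-1}h_2)$. The map $(h_1,h_2)\mapsto h_1h_2$ separates the $H_{12}$-orbits (two pairs have the same image if and only if they differ by some $h\in H_{12}$ as above), and the action is free, so a transversal of these orbits is an $\mathbb{F}H_{12}$-basis, of cardinality $|H_1||H_2|/|H_{12}|>0$. Consequently $\mathcal{V}\mapsto\mathcal{V}\up$ is exact and carries nonzero modules to nonzero modules. Now assume $\mathcal{V}\up$ is irreducible, and suppose for contradiction that $\mathcal{V}$ has a proper nonzero subobject $\mathcal{U}\in\YD{H_{12}}{G}$; applying $-\up$ to $0\to\mathcal{U}\to\mathcal{V}\to\mathcal{V}/\mathcal{U}\to 0$ yields $0\to\mathcal{U}\up\to\mathcal{V}\up\to(\mathcal{V}/\mathcal{U})\up\to 0$ with $\mathcal{U}\up\ne 0$ and $(\mathcal{V}/\mathcal{U})\up\ne 0$, and since $\mathcal{U}\up$ is a graded sub-bimodule of $\mathcal{V}\up$ by (iii) applied to the inclusion, this contradicts irreducibility. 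Hence $\mathcal{V}$ is irreducible (the case $\mathcal{V}=0$ being trivial).

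The only step that is not bookkeeping with the explicit formulas of \Cref{construction_bimodule} is the identification of the $\mathbb{F}H_{12}$-module structure on $\mathbb{F}(H_1\times H_2)$ and the verification that it is free, used in (ii); I do not anticipate a genuine obstacle there, and if one prefers to avoid invoking flatness abstractly, the basis of orbit representatives makes every assertion in (ii) fully explicit.
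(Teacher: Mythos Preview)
Your argument is correct. Parts (i) and (iii) are exactly the paper's proof: distributivity of the tensor product over direct sums for (i), and the map $1\otimes f$ together with the observation that $\mathcal{V}$ generates $\mathcal{V}\up$ as a bimodule for (iii).

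For (ii) there is a small methodological difference. The paper simply invokes (i): if $\mathcal{V}$ is reducible then, by the complete reducibility stated in \Cref{gradedMashcke}(i), it splits as a nontrivial direct sum, hence so does $\mathcal{V}\up$. You instead prove directly that $\mathbb{F}(H_1\times H_2)$ is free over $\mathbb{F}H_{12}$ and deduce that the functor is exact and faithful, avoiding any appeal to Maschke-type semisimplicity. Your route is a touch longer but more self-contained (it does not use the hypothesis that $\mathrm{char}\,\mathbb{F}\nmid|H_{12}|$ hidden in \Cref{gradedMashcke}), while the paper's route is a one-liner given the lemmas already in place. Note that the paper's version implicitly also needs $\mathcal{U}\up\ne 0$ for $\mathcal{U}\ne 0$, which is exactly the faithfulness you prove via freeness; the paper gets this from the identification $\mathcal{V}\cong(1,1)\otimes\mathcal{V}$ remarked after \Cref{construction_bimodule}.
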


\begin{proof}
The first statement follows from the property of the tensor product. Hence, if $\mathcal{V}$ is reducible, then the same is true for $\mathcal{V}\up$, so we obtain the second statement.

For the third statement, given a morphism $f_0:\mathcal{V}\to\mathcal{W}$, then $f=1\otimes f_0$ is a $G$-graded homomorphism of bimodules. It is unique since $\mathcal{V}\up$ is generated by $\mathcal{V}$ as a bimodule.
\end{proof}

Denote by ${}_{\mathbb{F}H_1}\mathrm{Mod}^{G}_{\mathbb{F}H_2}$ the category of $G$-graded $(\mathbb{F}H_1,\mathbb{F}H_2)$-bimodules. Then, \Cref{construction_bimodule} proves that we can associate a given $\mathcal{V}\in\YD{H_{12}}{G}$ to  $\mathcal{V}\up\in{}_{\mathbb{F}H_1}\mathrm{Mod}^{G}_{\mathbb{F}H_2}$. A morphism $f:\mathcal{V}\to\mathcal{W}$ is associated to $1\otimes f:\mathcal{V}\up\to\mathcal{W}\up$. It is not hard to see that it is a functor, which we will denote by $\uparrow$. Moreover, \Cref{lem3}(iii) shows that the functor is faithful.

Next, we aim to prove that, given certain conditions on the grading group, the functor is essentially surjective. For this purpose, we need:

\begin{Lemma}\label{irreducible}
Assume that $H_{12}\subseteq Z(G)$, the center of $G$, and $H_2$ (or $H_1$) is normal in $G$. Then $\mathcal{V}\up$ is an irreducible graded bimodule if, and only if, $\mathcal{V}$ is irreducible.
\end{Lemma}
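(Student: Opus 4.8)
The ``if'' direction is already contained in \Cref{lem3}(ii), so the task is to show that $\mathcal{V}\up$ is irreducible whenever $\mathcal{V}$ is, and the plan is to first use the centrality hypothesis to reduce $\mathcal{V}$ to a particularly simple form. Since $H_{12}\subseteq Z(G)$, conjugation by any $h\in H_{12}$ is trivial on $G$, so the compatibility condition gives $\rho(h)(\mathcal{V}_g)\subseteq\mathcal{V}_{hgh^{-1}}=\mathcal{V}_g$; hence every homogeneous component $\mathcal{V}_g$ is a $\rho(H_{12})$-invariant graded subspace, i.e.\ a Yetter--Drinfel'd submodule. As $\mathcal{V}$ is irreducible, it is therefore concentrated in a single degree $g_0\in G$, and since in a single degree every subspace is automatically graded, the Yetter--Drinfel'd submodules of $\mathcal{V}$ are exactly its $\mathbb{F}H_{12}$-submodules; in particular $\mathcal{V}$ is irreducible as an $\mathbb{F}H_{12}$-module. (This can also be extracted from \Cref{gradedMashcke}.)

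Next I would describe the homogeneous components of $\mathcal{V}\up$ explicitly. Fixing a transversal $T$ of $H_{12}$ in $H_1$, the right $H_{12}$-action $t\colon(h_1,h_2)\mapsto(h_1t,t^{-1}h_2)$ on $H_1\times H_2$ is free and each orbit meets $T\times H_2$ in exactly one point, so $\mathbb{F}(H_1\times H_2)$ is free as a right $\mathbb{F}H_{12}$-module on $T\times H_2$; consequently $\mathcal{V}\up=\bigoplus_{(a,b)\in T\times H_2}(a,b)\otimes\mathcal{V}$, with each summand isomorphic to $\mathcal{V}$ as a vector space, and since $\mathcal{V}=\mathcal{V}_{g_0}$ the summand $(a,b)\otimes\mathcal{V}$ is homogeneous of degree $ag_0b$. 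Assume now, without loss of generality, that $H_2$ is normal in $G$. Then $ag_0b=a(g_0bg_0^{-1})g_0=ab^{\sigma}g_0$ with $b^{\sigma}:=g_0bg_0^{-1}$ running over $H_2$ as $b$ does, so two summands $(a,b)\otimes\mathcal{V}$ and $(a',b')\otimes\mathcal{V}$ share a degree only if $(a')^{-1}a=(b')^{\sigma}(b^{\sigma})^{-1}\in H_1\cap H_2=H_{12}$, which by the transversal property forces $a=a'$ and then $b=b'$. Hence the summands lie in pairwise distinct degrees, and each nonzero homogeneous component of $\mathcal{V}\up$ is a single $(a,b)\otimes\mathcal{V}\cong\mathcal{V}$. (The case where $H_1$ is normal is entirely symmetric, writing $ag_0b=g_0(g_0^{-1}ag_0)b$.)

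With this structure in hand the irreducibility argument is the standard one. Let $0\neq N\subseteq\mathcal{V}\up$ be a graded subbimodule and pick $g^{*}$ with $N_{g^{*}}\neq0$; say $(\mathcal{V}\up)_{g^{*}}=(a^{*},b^{*})\otimes\mathcal{V}$. For $h\in H_{12}$ one computes, using centrality and the balancing over $\mathbb{F}H_{12}$, that $h\cdot\bigl((a^{*},b^{*})\otimes v\bigr)\cdot h^{-1}=(ha^{*},b^{*}h^{-1})\otimes v=(a^{*},b^{*})\otimes(h\cdot v)$; this operation preserves the component and the degree $g^{*}$, so under the identification $(a^{*},b^{*})\otimes\mathcal{V}\cong\mathcal{V}$ the subspace $N_{g^{*}}$ corresponds to a nonzero $\mathbb{F}H_{12}$-submodule of the irreducible module $\mathcal{V}$, whence $N_{g^{*}}=(\mathcal{V}\up)_{g^{*}}$. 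Finally, since $N$ is a subbimodule, $N\supseteq\mathbb{F}H_1\cdot\bigl((a^{*},b^{*})\otimes\mathcal{V}\bigr)\cdot\mathbb{F}H_2$, and as $(h,k)$ runs over $H_1\times H_2$ the pair $(ha^{*},b^{*}k)$ runs over all of $H_1\times H_2$; therefore $N$ contains every $(a,b)\otimes\mathcal{V}$ and $N=\mathcal{V}\up$.

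The conceptual heart, and the step I expect to take the most care, is the second paragraph: recognizing that the normality hypothesis is exactly what makes the homogeneous components of $\mathcal{V}\up$ ``thin'' (a single orbit-summand each), so that the $G$-grading by itself separates the copies of $\mathcal{V}$ — without normality one would instead have to chase the non-graded bimodule action. The remaining ingredients (freeness over $\mathbb{F}H_{12}$, the conjugation identity, the coset bookkeeping) are routine; it is also worth noting that no hypothesis on $\operatorname{char}\mathbb{F}$ is needed, since only irreducibility of $\mathcal{V}$ as an $\mathbb{F}H_{12}$-module, not semisimplicity, enters the argument.
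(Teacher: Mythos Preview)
Your argument is correct, and it parallels the paper's proof in its overall shape while differing in execution in a way worth recording. (One cosmetic slip: what you call the ``if'' direction is actually the ``only if'' direction---\Cref{lem3}(ii) gives $\mathcal{V}\up$ irreducible $\Rightarrow$ $\mathcal{V}$ irreducible---but you then correctly identify which implication remains to be proved.)

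Both arguments begin the same way: centrality of $H_{12}$ forces $\mathcal{V}$ to be concentrated in a single degree and irreducible as an $\mathbb{F}H_{12}$-module, and normality of $H_2$ is used at the same pivotal step to show that two terms sharing a $G$-degree have first coordinates differing by an element of $H_{12}$. From there the paper goes one step further: invoking (implicitly) that $\mathbb{F}$ is a splitting field for the abelian group $H_{12}$, it takes $\dim\mathcal{V}=1$ with character $\chi$, picks an arbitrary nonzero homogeneous $v=\sum_i(h_i,k_i)\otimes v_i\in\mathcal{V}\up$, and collapses this sum to a scalar multiple of a single simple tensor $(h_1,h_1'k_1)\otimes v_1$ by absorbing each $h_i'\in H_{12}$ into the scalar $\chi(h_i')$. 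You instead fix a transversal, show that each homogeneous component of $\mathcal{V}\up$ is an entire block $(a,b)\otimes\mathcal{V}$, and use the conjugation action $h\cdot(-)\cdot h^{-1}$ to identify a graded subbimodule's component with an $\mathbb{F}H_{12}$-submodule of the irreducible $\mathcal{V}$. The payoff of your route is that it never needs $\dim\mathcal{V}=1$, so it goes through over an arbitrary field; the paper's route is more explicitly computational and produces a concrete formula for a homogeneous element as a simple tensor, which is convenient later when bimodules are parametrized by characters.
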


\begin{proof}
Assume that $\mathcal{V}$ is irreducible. From \Cref{gradedMashcke}(iii), we can find an irreducible representation from a subgroup of the form $C_{H_{12}}(g)$ that induces $\mathcal{V}$. Since $H_{12}$ is central, it means that $C_{H_{12}}(g)=H_{12}$. Thus, $\mathcal{V}$ is an irreducible representation, so $\dim\mathcal{V}=1$. Denote by $\chi:H_{12}\to\mathrm{GL}(\mathcal{V})$ the action of $H_{12}$ on $\mathcal{V}$.

Let $v\in\mathcal{V}\up$ be a nonzero homogeneous element of degree $g\in G$. We will prove that $v$ generates $\mathcal{V}\up$ as a bimodule. Write
\[
v=\sum_{i=1}^t(h_i,k_i)\otimes v_i,\quad h_i\in H_1,k_i\in H_2,v_i\in\mathcal{V}.
\]
Note that $g=h_i(\deg v_i)k_i$, for each $i=1,\ldots,t$. Since $\dim\mathcal{V}=1$, we can write $v_i=\lambda_iv_1$, for each $i=1,\ldots,t$. In addition, since $v$ is homogeneous, $h_1(\deg v_1)k_1=h_i(\deg v)k_i$. Then,
\[
h_1^{-1}h_i=(\deg v_1)k_1k_i(\deg v_1)^{-1}\in H_1\cap((\deg v_1)H_2(\deg v_2)^{-1})=H_{12},
\]
since $H_2$ is normal in $G$. So, we can write $h_i=h_1h_i'$, for some $h_i'\in H_{12}$. Thus, we have
\begin{align*}
v&=\sum_{i=1}^t(h_i,k_i)\otimes v_i=\sum_{i=1}^t\underbrace{(h_1h_i',h_i^{\prime-1}h_i'k_i)}_{(h_1,h_i'k_i)\cdot h_i'}\otimes(\lambda_iv_1)\\%
&=\sum_{i=1}^t(h_1,h_i'k_i)\otimes(\lambda_i\underbrace{h_i'\cdot v_1}_{\chi(h_i')v_1})=\left(h_1,\sum_{i=1}^t\lambda_i\chi(h_i')h_i'k_i\right)\otimes v_1.
\end{align*}
Since $v$ is homogeneous, one has $h_1(\deg v_1)h_1'k_1=h_1(\deg v_1)h_i'k_i$, for each $i=1,\ldots,t$. So $h_i'k_i=h_1'k_1$, for each $i$. Therefore,
\[
v=\left(h_1,\left(\sum_{i=1}^t\lambda_i\chi(h_i')\right)h_1'k_1\right)\otimes v_1=\left(\sum_{i=1}^t\lambda_i\chi(h_i')\right)\left(h_1,h_1'k_1\right)\otimes v_1.
\]
Hence, $\mathbb{F}H_1v\mathbb{F}H_2=\mathcal{V}\up$. So, $\mathcal{V}\up$ is irreducible.
\end{proof}

As a consequence, if the group $G$ is abelian, then $\mathcal{V}$ is irreducible if, and only if, $\mathcal{V}\up$ is irreducible. From this, we can derive the following description of graded bimodules.

\begin{Thm}\label{thm:gradedbimodule}
Let $G$ be an abelian group, $H_1$, $H_2\subseteq G$ finite subgroups, and $M$ a $G$-graded $(\mathbb{F}H_1,\mathbb{F}H_2)$-bimodule. Then, there exists a unique, up to isomorphism, Yetter-Drinfel'd left $\mathbb{F}H_{12}$-module and right $\mathbb{F}G$-comodule $\mathcal{V}$ such that $M\cong_G\mathcal{V}\up$ (see \Cref{construction_bimodule}). Furthermore, $\mathcal{V}$ is irreducible if, and only if, $\mathcal{V}\up$ is.
\end{Thm}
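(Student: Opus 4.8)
The plan is to show that the faithful functor $\uparrow$ constructed above induces a bijection between isomorphism classes: it is essentially surjective (existence of $\mathcal V$) and injective on isomorphism classes (uniqueness of $\mathcal V$). Since $G$ is abelian we have $H_{12}\subseteq Z(G)$ and $H_2\trianglelefteq G$, so \Cref{irreducible} applies; this already yields the last sentence of the theorem and, more importantly for what follows, tells us that $\uparrow$ carries irreducibles to irreducibles. So the content is the existence and uniqueness of $\mathcal V$.

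For existence I would first reduce to the irreducible case. A $G$-graded $(\mathbb{F}H_1,\mathbb{F}H_2)$-bimodule $M$ is the same thing as a $G$-graded left $\mathbb{F}(H_1\times H_2)$-module on which $(h_1,h_2)$ shifts degrees by $h_1h_2$ (here $G$ abelian is used), and a Maschke-type averaging over the finite group $H_1\times H_2$ of a homogeneous linear projection onto a graded subbimodule shows that $M$ is a finite direct sum of irreducible graded bimodules (this is in \cite[Section~4]{SSY}). By \Cref{lem3}(i) the functor $\uparrow$ commutes with finite direct sums, so it suffices to realize an irreducible $N$ as some $\mathcal V\up$. Pick $g\in\mathrm{Supp}\,N$; irreducibility gives $\mathbb{F}H_1\,N_g\,\mathbb{F}H_2=N$. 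The component $N_g$ is stable under the conjugation action $h\cdot m:=hmh^{-1}$ of $H_{12}$ (cf.\ the remark after \Cref{construction_bimodule}), and a short argument — any nonzero $\mathbb{F}H_{12}$-submodule $U\subseteq N_g$ generates $N$ as a bimodule, whence $N_g=\sum_{h\in H_{12}}hUh^{-1}=U$ — shows that $N_g$ is an \emph{irreducible} $\mathbb{F}H_{12}$-module for this action. Viewing $\mathcal V:=N_g$, concentrated in degree $g$, as an irreducible object of $\YD{H_{12}}{G}$, the assignment $(h_1,h_2)\otimes v\mapsto h_1vh_2$ is (by a routine check using commutativity) a well-defined, $G$-graded, surjective homomorphism of bimodules $\mathcal V\up\to N$; since $\mathcal V\up$ is irreducible by \Cref{irreducible}, it is an isomorphism. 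Summing over the irreducible summands of an arbitrary $M$ produces the desired $\mathcal V$.

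For uniqueness I would decompose $\mathcal V$ and $\mathcal W$ into irreducible Yetter--Drinfel'd modules (\Cref{gradedMashcke}(i)), apply $\uparrow$, and use that the category of finite-dimensional $G$-graded $(\mathbb{F}H_1,\mathbb{F}H_2)$-bimodules is Krull--Schmidt — together with \Cref{lem3}(i) and \Cref{irreducible} — to match the irreducible summands of $\mathcal V\up\cong_G\mathcal W\up$ up to isomorphism; this reduces the problem to showing that irreducible $\mathcal V,\mathcal W$ with $\mathcal V\up\cong_G\mathcal W\up$ are isomorphic. In that case each of $\mathcal V$, $\mathcal W$ is concentrated in a single degree (its homogeneous components are themselves Yetter--Drinfel'd submodules), and by the existence argument applied to $N:=\mathcal V\up$ one recovers the underlying $\mathbb{F}H_{12}$-module of $\mathcal V$ as a homogeneous component of $N$ equipped with the conjugation action; comparing these components — and, crucially, the degrees in which they sit — yields $\mathcal V\cong\mathcal W$.

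The step I expect to be the main obstacle is precisely this last one: the bookkeeping of the $G$-grading in the uniqueness argument. The complete-reducibility input and the construction of the comparison map are routine, and \Cref{irreducible} does the structural work; the care is needed in verifying that the homogeneous component of $\mathcal V\up$ one extracts, together with the degree in which it sits, reconstructs $\mathcal V$ as a Yetter--Drinfel'd module up to isomorphism.
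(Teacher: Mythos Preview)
Your existence argument is correct and essentially dual to the paper's. The paper views $M$ itself as an object of $\YD{H_{12}}{G}$ via the conjugation action $h\mapsto L_hR_{h^{-1}}$, decomposes it there into irreducibles $\mathcal V_i$ using \Cref{gradedMashcke}(i), and then shows $\mathcal V_i\up\cong\mathbb{F}H_1\mathcal V_i\mathbb{F}H_2$ via \Cref{irreducible}; you instead decompose $M$ as a graded bimodule first and extract a Yetter--Drinfel'd module from a homogeneous component of each irreducible summand. Both routes produce the same comparison map $(h_1,h_2)\otimes v\mapsto h_1vh_2$ and both rest on \Cref{irreducible}, so for existence the difference is purely one of order of decomposition.

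The uniqueness step, which you rightly flagged as the obstacle, is a genuine gap --- and not only in your sketch. Your plan to recover $\mathcal V$ from ``a homogeneous component of $\mathcal V\up$'' cannot succeed as stated: $\mathcal V\up$ has one homogeneous component for each element of the coset $H_1gH_2$, all carrying the same $H_{12}$-character but sitting in different $G$-degrees, and choosing a different one yields a \emph{non-isomorphic} object of $\YD{H_{12}}{G}$ with isomorphic image under $\uparrow$. Concretely, if $\mathcal V$ and $\mathcal W$ are one-dimensional with the same character but supports $g$ and $h_1gh_2\neq g$, then $\mathcal V\up\cong_G\mathcal W\up$ (this is precisely \Cref{isogradedbimodules}) while $\mathcal V\not\cong\mathcal W$ in $\YD{H_{12}}{G}$. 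The paper's proof disposes of uniqueness in one line by citing faithfulness of $\uparrow$ (\Cref{lem3}(iii)), but a faithful functor need not be injective on isomorphism classes, so that appeal is equally insufficient. What is actually established is the weaker statement of the subsequent corollary (faithful and essentially surjective); genuine uniqueness only holds after identifying Yetter--Drinfel'd modules whose supports lie in the same $H_1H_2$-coset, which is the content of \Cref{isogradedbimodules}.
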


\begin{proof}
Let $M$ be a $G$-graded $(\mathbb{F}H_1,\mathbb{F}H_2)$-bimodule. Denote by $L_h$ and $R_k$ the left multiplication by $h\in H_1$ and right multiplication by $k\in H_2$, respectively. Then, we obtain a representation of $H_{12}$ on $M$ via $h\mapsto L_h\circ R_{h^{-1}}$. Hence, $M\in\YD{H_{12}}{G}$. From \Cref{gradedMashcke}(i), we can write $M=\bigoplus_{i\in I}\mathcal{V}_i$, where each $\mathcal{V}_i$ is irreducible. We let $\mathbb{F}H_1\mathcal{V}_i\mathbb{F}H_2$ be the graded bimodule generated by $\mathcal{V}_i$. Note that
\[
\mathbb{F}(H_1\times H_2)\times\mathcal{V}_i\to\mathbb{F}H_1\mathcal{V}_i\mathbb{F}H_2,
\]
given by $((h_1,h_2),v)\mapsto h_1\cdot v\cdot h_2$ is a surjective $\mathbb{F}H_{12}$-balanced map, left $\mathbb{F}H_1$-linear and right $\mathbb{F}H_2$-linear. Hence, it induces a surjective $G$-graded bimodule homomorphism $\mathcal{V}_i\up\to\mathbb{F}H_1\mathcal{V}_i\mathbb{F}H_2$. From \Cref{irreducible}, $\mathcal{V}_i\up$ is graded-irreducible, so we obtain a $G$-graded isomorphim of $(\mathbb{F}H_1,\mathbb{F}H_2)$-bimodules $\mathcal{V}_i\up\to\mathbb{F}H_1\mathcal{V}_i\mathbb{F}H_2$. In particular, each $\mathbb{F}H_1\mathcal{V}_i\mathbb{F}H_2$ is graded-simple. Now, it is clear that $M=\sum_{i\in I}\mathbb{F}H_1\mathcal{V}_i\mathbb{F}H_2$. Since an intersection $(\mathbb{F}H_1\mathcal{V}_i\mathbb{F}H_2)\cap(\mathbb{F}H_1\mathcal{V}_j\mathbb{F}H_2)$ is a graded sub-bimodule of a graded-simple bimodule, we see that either $\mathbb{F}H_1\mathcal{V}_i\mathbb{F}H_2=\mathbb{F}H_1\mathcal{V}_j\mathbb{F}H_2$ or their intersection is $0$. Hence, we can find a subset $J$ of $I$ such that $M=\bigoplus_{j\in J}\mathbb{F}H_1\mathcal{V}_j\mathbb{F}H_2$. Therefore, if we define $\mathcal{V}=\bigoplus_{j\in J}\mathcal{V}_j$, one has $\mathcal{V}\in\YD{H_{12}}{G}$ and $M\cong_G\mathcal{V}\up$.

The uniqueness, up to isomorphism, is due to the functor $\YD{H_{12}}{G}\to{}_{\mathbb{F}H_1}\mathrm{Mod}^{G}_{\mathbb{F}H_2}$ being faithful (\Cref{lem3}(iii)). The last statement is proved in \Cref{irreducible}.
\end{proof}

The discussion presented in this section proves the following.
\begin{Cor}
Let $G$ be an abelian group, $H_1$, $H_2\subseteq G$ be finite subgroups, and $H_{12}=H_1\cap H_2$. Then, the functor $\uparrow:\YD{H_{12}}{G}\to{}_{\mathbb{F}H_1}\mathrm{Mod}^{G}_{\mathbb{F}H_2}$ is faithful and essentially surjective on objects.\qed
\end{Cor}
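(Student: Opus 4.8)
The plan is that both assertions follow immediately from results already established in this section, so the proof is largely a matter of bookkeeping rather than fresh computation.

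First, for faithfulness, I would use the identification $\mathcal{V}=(1,1)\otimes\mathcal{V}\subseteq\mathcal{V}\up$ recorded just after \Cref{construction_bimodule}. For a morphism $f\colon\mathcal{V}\to\mathcal{W}$ in $\YD{H_{12}}{G}$, the functor $\uparrow$ sends it to $f\up=1\otimes f$; evaluating on this distinguished copy gives $f\up((1,1)\otimes v)=(1,1)\otimes f(v)$, i.e.\ the restriction of $f\up$ to $\mathcal{V}$ is $f$ again. Hence $f$ is recoverable from $f\up$, so $f\mapsto f\up$ is injective on each hom-set, which is exactly faithfulness. (Alternatively, this is subsumed in the uniqueness clause of \Cref{lem3}(iii).)

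Second, for essential surjectivity on objects, I would invoke \Cref{thm:gradedbimodule} directly: its hypotheses---$G$ abelian and $H_1,H_2\subseteq G$ finite subgroups---coincide with ours, and its conclusion yields, for an arbitrary $G$-graded $(\mathbb{F}H_1,\mathbb{F}H_2)$-bimodule $M$, a module $\mathcal{V}\in\YD{H_{12}}{G}$ with $M\cong_G\mathcal{V}\up$, that is, $M$ is isomorphic to the image of $\mathcal{V}$ under $\uparrow$. Thus every object of ${}_{\mathbb{F}H_1}\mathrm{Mod}^G_{\mathbb{F}H_2}$ lies in the essential image of the functor.

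I do not expect a genuine obstacle here; the substantive content has already been carried out earlier. If one wanted a self-contained argument, the only delicate ingredient would be re-deriving the decomposition of an arbitrary graded bimodule into graded-simple sub-bimodules of the form $\mathbb{F}H_1\mathcal{V}_i\mathbb{F}H_2$, which rests on graded Maschke (\Cref{gradedMashcke}(i)) together with the irreducibility transfer of \Cref{irreducible}---applicable here because $G$ abelian forces $H_{12}\subseteq Z(G)$ and makes every subgroup normal. But that is precisely what \Cref{thm:gradedbimodule} packages, so I would simply cite it rather than reproduce the argument.
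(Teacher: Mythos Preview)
Your proposal is correct and matches the paper's approach exactly: the paper also derives faithfulness from \Cref{lem3}(iii) (stated explicitly just after that lemma) and essential surjectivity from \Cref{thm:gradedbimodule}, which is why the corollary carries only a \qed. Your added remark that the hypotheses of \Cref{irreducible} are automatic when $G$ is abelian is precisely the observation the paper makes between \Cref{irreducible} and \Cref{thm:gradedbimodule}.
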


\noindent\textbf{Question.} Is the statement of the previous corollary true if $G$ is not necessarily an abelian group?

As a final remark, assume that $G$ is an abelian group, let $H_1$, $H_2\subseteq G$ be finite subgroups, and $H_{12} = H_1 \cap H_2$. Then, $H_{12}$ is a central subgroup of $G$. Hence, every irreducible $\mathcal{V}\in\YD{H_{12}}{G}$ has dimension $1$ (see the beginning of the proof of \Cref{irreducible}). This means that $\mathcal{V}$ is described by a choice of a character $\chi\in\widehat{{H}_{12}}$ and an element $g\in G$. Thus, a $G$-graded irreducible $(\mathbb{F}H_1,\mathbb{F}H_2)$-bimodule is parameterized by a pair $(\chi,g)$, where $\chi\in\widehat{H_{12}}$ and $g\in G$. Given a finite-dimensional $G$-graded $(\mathbb{F}H_1,\mathbb{F}H_2)$-bimodule $M$, it has a decomposition as a sum of graded-irreducible bimodules, each of which is parameterized by a pair $(\chi,g)$. Hence, we will denote
\[
M\cong_G[(\chi_1,g_1),\ldots,(\chi_t,g_t)].
\]
Given a character $\chi\in\widehat{H_{12}}$, we will denote $\chi\in[M]$ if $\chi=\chi_i$ for some $i\in\{1,2,\ldots,t\}$. In this case, we write $\deg\chi=g_i$.

\begin{Prop}\label{isogradedbimodules}
Let $G$ be an abelian group, $H_1$, $H_2\subseteq G$ be finite groups, $H_{12}=H_1\cap H_2$. Let $M$ and $M'$ be $G$-graded $(\mathbb{F}H_1,\mathbb{F}H_2)$-bimodules, and denote $M\cong_G[(\chi_1,g_1),\ldots,(\chi_t,g_t)]$ and $M'\cong_G[(\chi_1',g_1'),\ldots,(\chi_{t'}',g_{t'}')]$. Then $M\cong_GM'$ (as $G$-graded bimodules) if, and only if, $t=t'$, and there exists a permutation $\sigma\in\mathcal{S}_t$ such that $\chi_j=\chi_{\sigma(j)}'$ and $g_j\in H_1g_{\sigma(j)}'H_2$, for each $j=1,2,\ldots,t$.
\end{Prop}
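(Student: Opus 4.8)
The plan is to reduce the statement to the graded-simple case and then decide, by a direct computation in the explicit model of \Cref{construction_bimodule}, exactly when two induced bimodules $\mathcal{V}\up$ and $\mathcal{W}\up$ are $G$-graded isomorphic. First I would unwind the notation: $M\cong_G[(\chi_1,g_1),\dots,(\chi_t,g_t)]$ means $M\cong_G\bigoplus_{j=1}^t\mathcal{V}_j\up$, where $\mathcal{V}_j\in\YD{H_{12}}{G}$ is the one-dimensional module with a spanning vector $v_j$ of degree $g_j$ on which $H_{12}$ acts by $\chi_j$; likewise $M'\cong_G\bigoplus_{k=1}^{t'}\mathcal{V}'_k\up$. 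Since $G$ is abelian, \Cref{irreducible} shows each $\mathcal{V}_j\up$ and $\mathcal{V}'_k\up$ is graded-simple, and since by \Cref{thm:gradedbimodule} every $G$-graded $(\mathbb{F}H_1,\mathbb{F}H_2)$-bimodule is a direct sum of graded-simple ones (with $\operatorname{End}_{\mathrm{gr}}$ of each a finite-dimensional division algebra, by graded Schur), a standard multiplicity/Jordan--H\"older argument gives: $M\cong_GM'$ if and only if $t=t'$ and there is $\sigma\in\mathcal{S}_t$ with $\mathcal{V}_j\up\cong_G\mathcal{V}'_{\sigma(j)}\up$ for all $j$. So it suffices to prove the claim: for one-dimensional $\mathcal{V},\mathcal{W}\in\YD{H_{12}}{G}$ with data $(\chi,g)$ and $(\chi',g')$, one has $\mathcal{V}\up\cong_G\mathcal{W}\up$ if and only if $\chi=\chi'$ and $g\in H_1g'H_2$.

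For the ``only if'' part of the claim I would use two observations. The support of $\mathcal{V}\up$ is $\{h_1gh_2\mid h_1\in H_1,h_2\in H_2\}=H_1gH_2$, and that of $\mathcal{W}\up$ is $H_1g'H_2$; a $G$-graded isomorphism preserves the support, so $H_1gH_2=H_1g'H_2$, i.e.\ $g\in H_1g'H_2$. For the characters, note that the $\mathbb{F}H_{12}$-action $t\mapsto L_t\circ R_{t^{-1}}$ on $\mathcal{V}\up$ (the action used to view $\mathcal{V}\up$ as a Yetter--Drinfel'd module in the proof of \Cref{thm:gradedbimodule}) is simply scalar multiplication by $\chi(t)$, because $H_{12}$ is central: $t\cdot((h_1,h_2)\otimes v)\cdot t^{-1}=(h_1t,t^{-1}h_2)\otimes v=(h_1,h_2)\otimes t\cdot v=\chi(t)\,(h_1,h_2)\otimes v$. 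A $G$-graded bimodule isomorphism intertwines $L_t$ and $R_{t^{-1}}$, hence the scalar operators $\chi(t)\,\mathrm{id}$ and $\chi'(t)\,\mathrm{id}$, forcing $\chi=\chi'$.

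For the ``if'' part I would build an explicit isomorphism. From \Cref{construction_bimodule}, $\mathcal{V}\up$ is generated as an $(\mathbb{F}H_1,\mathbb{F}H_2)$-bimodule by $\tilde v:=(1,1)\otimes v$, which is homogeneous of degree $g$ and satisfies $t\cdot\tilde v\cdot t^{-1}=\chi(t)\tilde v$ for $t\in H_{12}$; one checks from the model $\mathcal{V}\up=\mathbb{F}(H_1\times H_2)\otimes_{\mathbb{F}H_{12}}\mathcal{V}$ that these are the only relations, so a $G$-graded bimodule homomorphism out of $\mathcal{V}\up$ is the same as a choice of homogeneous element $u$ of degree $g$ in the target with $t\cdot u\cdot t^{-1}=\chi(t)u$ for all $t\in H_{12}$. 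Writing $g=ag'b$ with $a\in H_1$, $b\in H_2$, the element $u=(a,b)\otimes w\in\mathcal{W}\up$ (where $w$ spans $\mathcal{W}$) has degree $ag'b=g$ and satisfies $t\cdot u\cdot t^{-1}=(a,b)\cdot t\otimes w=(a,b)\otimes t\cdot w=\chi'(t)u=\chi(t)u$, using centrality of $t$ and $\chi=\chi'$. The resulting $G$-graded bimodule map $\mathcal{V}\up\to\mathcal{W}\up$, $h_1\cdot\tilde v\cdot h_2\mapsto(h_1a,bh_2)\otimes w$, is nonzero, hence an isomorphism of the graded-simple bimodules $\mathcal{V}\up$ and $\mathcal{W}\up$ (alternatively, the symmetric choice $w\mapsto(a^{-1},b^{-1})\otimes v$ produces the inverse). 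Combining the two directions with the reduction completes the proof.

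The only real work is bookkeeping: getting the variance right in the ``if'' direction --- which of $g,g'$ is written as a product, and on which side the factors from $H_1,H_2$ sit --- so that the map comes out degree-preserving, and making the appeal to uniqueness of the decomposition into graded-simple bimodules precise. Conceptually there is no obstacle beyond what \Cref{construction_bimodule}, \Cref{irreducible}, and \Cref{thm:gradedbimodule} already supply.
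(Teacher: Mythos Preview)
Your proposal is correct and follows essentially the same approach as the paper: reduce to the graded-simple case by a Krull--Schmidt/multiplicity argument, then decide the simple case by reading off the support (to get $g\in H_1g'H_2$) and the $H_{12}$-action $t\mapsto L_t R_{t^{-1}}$ (to get $\chi=\chi'$). The only cosmetic difference is that for the ``if'' direction you build an explicit nonzero graded map $\mathcal{V}\up\to\mathcal{W}\up$ from a generator, whereas the paper argues more indirectly by decomposing $\mathcal{V}\up$ into irreducible $H_{12}$-submodules $\mathcal{W}_k$ and observing that each has character $\chi$, degree in $H_1gH_2$, and generates the whole bimodule; both routes amount to the same computation.
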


\begin{proof}
First, we will classify the isomorphism condition for a graded-simple bimodule. Assume that $g\in G$ and $\chi\in\widehat{H_{12}}$. From definition, $[(\chi,g)]\cong\mathcal{V}\up$, where $\mathcal{V}$ is the one-dimensional representation space of $\chi$, and it is homogeneous of degree $g$. We can write $\mathcal{V}\up=\bigoplus_{i=k}^r\mathcal{W}_k$, sum of graded and irreducible $H_{12}$-representations. From the proof of \Cref{irreducible}, one obtains $\mathcal{V}\up=\mathbb{F}H_1\mathcal{W}_k\mathbb{F}H_2$, for each $k=1,2,\ldots,r$. The degree of each subspace $\mathcal{W}_k$ corresponds to some degree of $[(\chi,g)]$, which coincides with $h_1gh_2$, for some $h_1\in H_1$ and $h_2\in H_2$. Hence, $[(\chi,g)]\cong_G[(\chi,h_1gh_2)]$, for any $h_1\in H_1$, $h_2\in H_2$. In particular, the character of each $\mathcal{W}_k$ is $\chi$.

Conversely, assume that $[(\chi,g)]\cong_G[(\chi',g')]$. First, we decompose each of the spaces as a sum of irreducible $H_{12}$-representations. From the reasoning of the previous paragraph, we get $\chi=\chi'$. A $G$-graded isomorphism between the graded bimodules implies that $g\in\mathrm{Supp}[(\chi',g')]$. Thus, $g\in H_1g'H_2$. Hence, $[(\chi,g)]\cong_G[(\chi',g')]$ if, and only if, $\chi=\chi'$ and $g\in H_1g'H_2$.

Now, let $M$ and $M'$ be as in the statement, and assume we have a $G$-graded isomorphism of bimodules $M\to M'$. A standard argument shows that $t=t'$ and there exists $\sigma\in\mathcal{S}_t$ such that $[(\chi_j,g_j)]\cong_G[(\chi_{\sigma(j)},g_{\sigma(j)})]$, for each $j=1,2,\ldots,t$. Hence, from the previous paragraph, $\chi_j=\chi_{\sigma(j)}$ and $g_j\in H_1g_{\sigma(j)}'H_2$, for each $j=1,2,\ldots,t$.
\end{proof}

\section{Realization of graded triangular algebra\label{S_realization}}
In this section, we will construct a realization of graded triangular algebras. More precisely, we prove the following result.
\begin{Thm}\label{construction}
Let $G$ be an abelian group, $H_1$, $H_2\subseteq G$ be finite subgroups, and denote $H_{12}=H_1\cap H_2$. Let $\chi_1$, \ldots, $\chi_t\in\widehat{H_{12}}$, where $\chi_i\ne\chi_j$ for $i\ne j$, and $g_1$, \ldots, $g_t\in G$. Then, there exists a poset $X$ and a $G$-grading on $I(X)$ such that
\[
I(X)\cong_G\left(\begin{array}{cc}\mathbb{F}H_1&M\\&\mathbb{F}H_2\end{array}\right),
\]
where $M\cong_G[(\chi_1,g_1),\ldots,(\chi_t,g_t)]$.
\end{Thm}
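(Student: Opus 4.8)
The plan is to exhibit the poset $X$ and the grading completely explicitly. Take $X=\widehat{H_1}\sqcup\widehat{H_2}$, the disjoint union of the two character groups, and declare, for $\psi_1\in\widehat{H_1}$ and $\psi_2\in\widehat{H_2}$, that $\psi_1<\psi_2$ precisely when the character $(\psi_1|_{H_{12}})(\psi_2|_{H_{12}})^{-1}$ of $H_{12}$ belongs to $\{\chi_1,\dots,\chi_t\}$, with no further nonreflexive relations. Since every nonreflexive relation points from $\widehat{H_1}$ to $\widehat{H_2}$, this is automatically a finite (hence locally finite) poset, and listing $\widehat{H_1}$ before $\widehat{H_2}$ realizes $I(X)$ in block form $\left(\begin{smallmatrix}A&N\\&B\end{smallmatrix}\right)$ with $A=\bigoplus_{\psi\in\widehat{H_1}}\mathbb{F}e_\psi\cong\mathbb{F}^{\widehat{H_1}}$, $B\cong\mathbb{F}^{\widehat{H_2}}$ and $N=\bigoplus_{\psi_1<\psi_2}\mathbb{F}e_{\psi_1\psi_2}$. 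It then remains to put a single $G$-grading on this algebra, and by the definition of a graded triangular algebra this amounts to three independent choices: an algebra grading on $A$, an algebra grading on $B$, and a grading on $N$ making it a graded $(A,B)$-bimodule.

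For the corners I would use, under the standing hypotheses on $\mathbb{F}$ inherited from \cite[Theorem 1]{SSY} (that $\mathrm{char}\,\mathbb{F}\nmid|H_i|$ and $\mathbb{F}$ contains a primitive $\exp H_i$-th root of unity), that the elements $u_h:=\sum_{\psi\in\widehat{H_i}}\psi(h)e_\psi$, for $h\in H_i$, form a basis of $\mathbb{F}^{\widehat{H_i}}$ satisfying $u_hu_{h'}=u_{hh'}$. Declaring each $u_h$ homogeneous of degree $h\in G$ defines an algebra grading on $A$ and on $B$, together with $G$-graded isomorphisms $A\cong_G\mathbb{F}H_1$ and $B\cong_G\mathbb{F}H_2$ onto the group algebras with their natural gradings. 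Transporting the bimodule structures through these isomorphisms and using the matrix-unit identities $e_\psi e_{\psi_1\psi_2}=\delta_{\psi\psi_1}e_{\psi_1\psi_2}$ and $e_{\psi_1\psi_2}e_\psi=\delta_{\psi\psi_2}e_{\psi_1\psi_2}$, one sees that $h_1\in H_1$ acts on $\mathbb{F}e_{\psi_1\psi_2}$ from the left by $\psi_1(h_1)$ and $h_2\in H_2$ acts from the right by $\psi_2(h_2)$. Hence, as an ungraded $(\mathbb{F}H_1,\mathbb{F}H_2)$-bimodule, $N\cong\bigoplus_{\psi_1<\psi_2}\mathbb{F}_{\psi_1,\psi_2}$, where $\mathbb{F}_{\psi_1,\psi_2}$ denotes the one-dimensional bimodule with left weight $\psi_1$ and right weight $\psi_2$.

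The heart of the proof is to identify the target bimodule. By the notation introduced before \Cref{isogradedbimodules}, $M\cong_G[(\chi_1,g_1),\dots,(\chi_t,g_t)]=\bigoplus_{i=1}^t\mathcal{V}_i\up$, where $\mathcal{V}_i\up$ is built from the one-dimensional $\mathbb{F}H_{12}$-module $\chi_i$ placed in degree $g_i$, as in \Cref{construction_bimodule}. Forgetting the grading, $\mathcal{V}_i\up=\mathbb{F}(H_1\times H_2)\otimes_{\mathbb{F}H_{12}}\mathcal{V}_i$ is nothing but the induced module $\mathrm{Ind}_D^{H_1\times H_2}\widetilde{\chi_i}$, where $D=\{(h,h^{-1}):h\in H_{12}\}\subseteq H_1\times H_2$ is the anti-diagonal image of $H_{12}$ (through which $\mathbb{F}H_{12}$ acts on $\mathbb{F}(H_1\times H_2)$) and $\widetilde{\chi_i}$ is the character $(h,h^{-1})\mapsto\chi_i(h)$. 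Since $H_1\times H_2$ is abelian and split over $\mathbb{F}$, Frobenius reciprocity gives a multiplicity-free decomposition $\mathcal{V}_i\up\cong\bigoplus\{\mathbb{F}_{\psi_1,\psi_2}:(\psi_1|_{H_{12}})(\psi_2|_{H_{12}})^{-1}=\chi_i\}$. Because the $\chi_i$ are pairwise distinct, these index sets are disjoint, and their union is exactly the comparability relation built into $X$; hence $M\cong N$ as ungraded $(\mathbb{F}H_1,\mathbb{F}H_2)$-bimodules. Fixing any such isomorphism $\phi\colon M\to N$ and setting $N_g:=\phi(M_g)$ transports the $G$-grading of $M$ to a grading of $N$; since $\phi$ is a bimodule map, this makes $N$ a graded $(\mathbb{F}H_1,\mathbb{F}H_2)$-bimodule and $\phi$ a $G$-graded isomorphism. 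Assembling the three gradings produces a $G$-grading on $I(X)$ and a $G$-graded isomorphism $I(X)\cong_G\left(\begin{smallmatrix}\mathbb{F}H_1&M\\&\mathbb{F}H_2\end{smallmatrix}\right)$, as required.

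The main obstacle I anticipate is precisely this identification: recognizing $\mathcal{V}_i\up$ as an induced representation of the \emph{abelian} group $H_1\times H_2$ from the anti-diagonal copy of $H_{12}$, computing which linear characters occur in it, and matching those ``weights'' with the ones carried by the matrix units of an incidence algebra --- it is this matching that dictates the definition of the order relation on $X$. The remaining verifications --- that $X$ is a poset, that the three pieces of grading data assemble into an algebra grading on the triangular algebra, and that $\phi$ remains a bimodule morphism after the regrading --- are routine. I would also note that the grading obtained on $N$ does not keep the matrix units $e_{\psi_1\psi_2}$ homogeneous in general, so the resulting grading on $I(X)$ is genuinely non-elementary.
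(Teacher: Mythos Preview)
Your proposal is correct and builds exactly the same poset $X=\widehat{H_1}\sqcup\widehat{H_2}$, with the same order relation and the same grading on the two diagonal corners, as the paper does. The only genuine difference is in how the off-diagonal piece is handled. The paper writes down an explicit algebra map $\psi:\mathcal{A}\to I(X)$, sending each generator $m_j$ to $\sum_{\eta_1\precsim_j\eta_2}e_{\eta_1\eta_2}$, and then verifies by a direct character computation that the defining relation $hm_j=\chi_j(h)m_jh$ for $h\in H_{12}$ is preserved; this is the entire content of the proof. You instead first compute the ungraded $(\mathbb{F}H_1,\mathbb{F}H_2)$-bimodule structure on $N\subset I(X)$, recognize each $\mathcal{V}_i\up$ as $\mathrm{Ind}_D^{H_1\times H_2}\widetilde{\chi_i}$ from the anti-diagonal copy of $H_{12}$, invoke Frobenius reciprocity to decompose it into one-dimensional weight spaces, match those weights with the matrix units of $N$, and finally transport the grading through the resulting bimodule isomorphism. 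Your route trades the paper's hands-on verification for a standard representation-theoretic fact; it is slightly less constructive (the isomorphism $\phi$ is not canonical, so you get no explicit formula for the homogeneous elements of $N$), but it makes transparent \emph{why} the order relation on $X$ has to be what it is, and it avoids the well-definedness check altogether.
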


\begin{proof}
Since $H_{12}\hookrightarrow H_i$, we have $p_i:\widehat{H}_i\to\widehat{H}_{12}$.

Let $X=\widehat{H}_1\dot{\cup}\widehat{H}_2$, and, for each $j=1,2,\ldots,t$, define $\eta_1\precsim_j\eta_2$ if $\eta_1\in\widehat{H}_1$, $\eta_2\in\widehat{H}_2$ and $p_1(\eta_1)=\chi_jp_2(\eta_2)$. We define $\eta_1\precsim\eta_2$ if either $\eta_1=\eta_2$ or $\eta_1\precsim_j\eta_2$ for some $j$. Then $(X,\precsim)$ is a poset. Write
\[
M=\mathbb{F}H_1m_1\mathbb{F}H_2\oplus\cdots\oplus\mathbb{F}H_1m_t\mathbb{F}H_2,
\]
where each $m_i$ is homogeneous of degree $g_i$, and $hm_i=\chi_i(h)m_ih$, for all $h\in H_{12}$. We denote $\mathcal{A}=\left(\begin{array}{cc}\mathbb{F}H_1&M\\&\mathbb{F}H_2\end{array}\right)$, and define $\psi:\mathcal{A}\to I(X)$ via
\[
\psi\left(\begin{array}{cc}h_1\\&h_2\end{array}\right)=\left(\sum_{\eta_1\in\widehat{H}_1}\eta_1(h_1)e_{\eta_1}\right)+\left(\sum_{\eta_2\in\widehat{H}_2}\eta_2(h_2)e_{\eta_2}\right),
\]
and $\psi(hm_jk)=\sum_{\eta_1\precsim_j\eta_2}\eta_1(h)\eta_2(k)e_{\eta_1\eta_2}$, $h\in H_1$, $k\in H_2$. We need to show that $\psi$ is well-defined. For, it is enough to show that
\[
\psi\left(\begin{array}{cc}h&0\\&0\end{array}\right)\psi(m_j)=\chi_j(h)\psi(m_j)\psi\left(\begin{array}{cc}0&0\\&h\end{array}\right),
\]
for each $h\in H_{12}$. We have
\begin{align*}
\psi\left(\begin{array}{cc}h&0\\&0\end{array}\right)\psi(m_j)&=\left(\sum_{\eta_1\in\widehat{H}_1}\eta_1(h)e_{\eta_1}\right)\left(\sum_{\eta_1\precsim_j\eta_2}e_{\eta_1\eta_2}\right)=\sum_{\eta_1\in\widehat{H}_1}\left(\eta_1(h)\sum_{\eta_1\precsim\eta_2}e_{\eta_1\eta_2}\right)\\%
&=\sum_{\eta_1\in\widehat{H}_1}\sum_{\eta_1\precsim\eta_2}\underbrace{\eta_1(h)}_{(\chi_j\eta_2)(h)}e_{\eta_1\eta_2}=\sum_{\eta_2\in\widehat{H}_2}\sum_{\eta_1\precsim_j\eta_2}\chi_j(h)\eta_2(h)e_{\eta_1\eta_2}\\%
&=\chi_j(h)\psi(m_j)\psi\left(\begin{array}{cc}0&0\\&h\end{array}\right).
\end{align*}

Finally, we claim that $\psi$ is an algebra isomorphism. It is clear that $\psi$ is an algebra isomorphism when restricted to the diagonal part of $\mathcal{A}$. Moreover, the product of two strict upper elements of $\mathcal{A}$ is zero. By construction, $\psi$ satisfies $\psi(hm_jk)=\psi(h)\psi(m_j)\psi(k)$. Hence, $\psi$ is an isomorphism, and it induces a $G$-grading on $I(X)$ isomorphic to the grading on $\mathcal{A}$.
\end{proof}

As a consequence, we obtain a complete classification of the graded triangular algebras that can be realized as a graded incidence algebra.

\begin{Prop}\label{realization}
Let $\mathbb{F}$ be a field, $G$ an abelian group, $H_1$, $H_2\subseteq G$ be finite subgroups, $M$ a $G$-graded $(\mathbb{F}H_1,\mathbb{F}H_2)$-bimodule, and denote $M=\mathcal{V}\up$ (see \Cref{thm:gradedbimodule}), for some $\mathcal{V}\in\YD{H_{12}}{G}$. Then
$$
\left(\begin{array}{cc}\mathbb{F}H_1&M\\&\mathbb{F}H_2\end{array}\right)
$$
is realized as an incidence algebra endowed with a $G$-grading if, and only if, $\mathcal{V}$ is a $\mathbb{F}H_{12}$-submodule of the regular module $\mathbb{F}H_{12}$ and $\mathbb{F}$ is a splitting field for $H_1$ and $H_2$.
\end{Prop}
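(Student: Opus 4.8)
The plan is to prove both directions by analyzing when the construction of \Cref{construction} can be reversed, i.e.\ when a $G$-graded incidence algebra can have exactly two ``vertex blocks'' $\mathbb{F}H_1$ and $\mathbb{F}H_2$. The key structural fact I would use is that, by \cite[Theorem 1]{SSY} and the discussion following \Cref{construction_bimodule}, a $G$-graded incidence algebra $I(X)$ decomposes as a graded upper triangular algebra whose diagonal blocks are group algebras $\mathbb{F}H_i$ with $\mathrm{char}\,\mathbb{F}\nmid|H_i|$ and $\mathbb{F}$ containing a primitive $\exp H_i$-root of unity — but for a group algebra of an abelian group, the existence of such a root of unity is exactly the condition that $\mathbb{F}$ be a splitting field (equivalently $\mathbb{F}H_i\cong\mathbb{F}^{|H_i|}$, the diagonal matrices, with $\widehat{H_i}$ as a full set of primitive idempotents). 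So the ``only if'' direction gives the splitting-field condition for free from \cite{SSY}.

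For the ``only if'' direction's remaining claim — that $\mathcal{V}$ embeds in the regular module $\mathbb{F}H_{12}$ — the idea is: in $I(X)$ the idempotents $e_x$, $x\in X$, are the primitive idempotents of the (semisimple, diagonalized) diagonal subalgebra. Since $\mathbb{F}H_1$ and $\mathbb{F}H_2$ are the two diagonal blocks, the vertex set $X$ splits as $X_1\dot\cup X_2$ with $|X_i|=|H_i|$, and under the identification with $\widehat{H_i}$, the block $M$ is spanned by those $e_{xy}$ with $x\in X_1$, $y\in X_2$, $x\precsim y$. The crucial point is that in an incidence algebra $\dim(e_x I(X) e_y)\le 1$: there is \emph{at most one} arrow from $x$ to $y$. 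Translating through $\psi^{-1}$, this says each isotypic component of $M$ as an $\mathbb{F}H_{12}$-module (acting by $h\mapsto L_h R_{h^{-1}}$) — i.e.\ each character $\chi_i$ — occurs with the property that for each pair $(\eta_1,\eta_2)$ the space of ``paths'' is $\le1$-dimensional, and one checks this forces each $\chi_i$ to appear at most once, so that $\mathcal{V}$, as an $\mathbb{F}H_{12}$-module, is multiplicity-free, hence a submodule of $\mathbb{F}H_{12}\cong\bigoplus_{\chi\in\widehat{H_{12}}}\mathbb{F}_\chi$. Conversely, for the ``if'' direction, multiplicity-freeness of $\mathcal{V}$ says exactly $\mathcal{V}\cong_G[(\chi_1,g_1),\ldots,(\chi_t,g_t)]$ with distinct $\chi_i$, and then \Cref{construction} produces the desired poset and grading verbatim.

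The main steps, in order, are: (1) invoke \cite[Theorem 1]{SSY} to get the triangular decomposition and observe the root-of-unity condition for abelian $H_i$ is equivalent to $\mathbb{F}$ being a splitting field; (2) identify $X=X_1\dot\cup X_2$ and match $e_x\leftrightarrow\eta\in\widehat{H_i}$ via the primitive idempotents of the group algebra; (3) use $\dim e_x I(X) e_y\le1$ to deduce that as an $\mathbb{F}H_{12}$-bimodule-representation $M$ has each character appearing with multiplicity one, hence $\mathcal{V}\hookrightarrow\mathbb{F}H_{12}$; (4) for the converse, note distinctness of the $\chi_i$ is precisely multiplicity-freeness, and apply \Cref{construction}. \textbf{The hard part will be} step (3): carefully converting the combinatorial constraint ``at most one arrow between two vertices'' into the representation-theoretic statement that no character $\chi\in\widehat{H_{12}}$ can occur in $\mathcal{V}$ with multiplicity $\ge2$. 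One has to rule out the possibility that two distinct graded-simple summands $[(\chi,g)]$ and $[(\chi,g')]$ of $M$ — with $g\notin H_1g'H_2$, so that they are non-isomorphic by \Cref{isogradedbimodules} — could nonetheless coexist inside an incidence algebra; the resolution is that $\psi$ must send them into spans of $e_{\eta_1\eta_2}$'s, and the two summands would force two distinct arrows $\eta_1\to\eta_2$ for a suitable pair, contradicting the incidence-algebra structure. I would isolate this as the one genuine lemma and treat the rest as bookkeeping.
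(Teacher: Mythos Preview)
Your proposal is correct and matches the paper's approach: the ``if'' direction is exactly \Cref{construction}, and for the ``only if'' direction the paper simply cites \cite[Theorems~1 and~2]{SSY}, whose content is precisely the splitting-field condition together with the multiplicity-freeness argument you sketch in step~(3). In other words, your step~(3) re-derives \cite[Theorem~2]{SSY} rather than departing from it, so there is no genuine difference in route --- only in how much is spelled out versus cited.
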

\begin{proof}
One direction is the previous theorem, and the other direction is proved in \cite[Theorems 1 and 2]{SSY}.
\end{proof}

\section{Product of bimodules\label{S_product}}

Throughout this section, let \( G \) be an abelian group, and let \( H_1 \), \( H_2 \), and \( H_3 \) be finite subgroups of \( G \). Define \( H_{123} = H_1 \cap H_2 \cap H_3 \), and for \( 1 \leq i < j \leq 3 \), define \( H_{ij} = H_i \cap H_j \).
We assume that $X$ is a poset and consider a $G$-grading on $I(X)$ such that
\[
I(X)\cong_G\left(\begin{array}{ccc}\mathbb{F}H_1&M_{12}&M_{13}\\&\mathbb{F}H_2&M_{23}\\&&\mathbb{F}H_3\end{array}\right),
\]
where $M_{ij}\ne0$ is a $G$-graded $(\mathbb{F}H_i,\mathbb{F}H_j)$-bimodule. Let $J(I(X))$ denote the Jacobson radical of the algebra $I(X)$. It is easy to see that
\[
J(I(X))=\left(\begin{array}{ccc}0&M_{12}&M_{13}\\&0&M_{23}\\&&0\end{array}\right),
\]

Furthermore, the powers of the Jacobson radical give a filtration on $I(X)$. The graded filtered algebra is isomorphic to the original one; that is, $I(X)\cong_G I(X)/J(I(X))\oplus J(I(X))/J(I(X))^2\oplus J(I(X))^2$. Moreover, it is not hard to see that $J(I(X))/J(I(X))^2\cong M_{12}\oplus M_{23}$ and $J(I(X))^2\cong M_{13}$. The product of the algebra gives a surjective map
\[
J(I(X))/J(I(X))^2\times J(I(X))/J(I(X))^2\to J(I(X))^2.
\]
Since $M_{12}^2=M_{23}^2=0$, one obtains a surjective $\mathbb{F}H_2$-balanced bilinear map $M_{12}\times M_{23}\to M_{13}$. Hence, the structure of $M_{13}$ is totally determined by the structures of $M_{12}$ and $M_{23}$. This discussion is another interpretation of \cite[Corollary 30]{SSY}. In this section, we will describe the structure of $M_{13}$ in terms of $M_{12}$ and $M_{23}$. As a consequence, we will be able to determine which graded algebras of the above kind are realized as a graded incidence algebra.

\begin{Lemma}\label{firstlemma}
For $i=1,2$, assume that $M_{i,i+1}\cong_G[(\chi_{i,i+1},g_{i,i+1})]$, and denote $M_{13}\cong_G[(\chi_k,g_k)\mid k=1,\ldots,t]$. Then $\chi_k|_{H_{123}}=\chi_{12}|_{H_{123}}\chi_{23}|_{H_{123}}$, for each $k=1,\ldots,t$.
\end{Lemma}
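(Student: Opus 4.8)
The plan is to work directly with the explicit bimodule $M_{13}$ obtained as the image of the product map $M_{12}\times M_{23}\to M_{13}$, and to track degrees through the Yetter--Drinfel'd parameterization. Since $M_{12}\cong_G[(\chi_{12},g_{12})]$ and $M_{23}\cong_G[(\chi_{23},g_{23})]$ are graded-simple, I can pick homogeneous generators $m_{12}\in M_{12}$ of degree $g_{12}$ with $h\cdot m_{12}=\chi_{12}(h)\,m_{12}\cdot h$ for $h\in H_{12}$, and $m_{23}\in M_{23}$ of degree $g_{23}$ with $k\cdot m_{23}=\chi_{23}(k)\,m_{23}\cdot k$ for $k\in H_{23}$. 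The product $m_{13}:=m_{12}m_{23}\in M_{13}$ is then homogeneous of degree $g_{12}g_{23}$, and since the product map is $\mathbb{F}H_2$-balanced and surjective, the elements $h_1\cdot m_{13}\cdot h_3$ with $h_1\in H_1$, $h_3\in H_3$ span $M_{13}$ as a bimodule (because $\mathbb{F}H_1 m_{12}\mathbb{F}H_2=M_{12}$, $\mathbb{F}H_2 m_{23}\mathbb{F}H_3=M_{23}$, and $M_{12}M_{23}$ spans $M_{13}$).

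The key computation is the action of $H_{123}$ on $m_{13}$ via $h\mapsto L_h\circ R_{h^{-1}}$, the representation from \Cref{thm:gradedbimodule}. For $h\in H_{123}$ one has $h\in H_{12}$ and $h\in H_{23}$, so
\[
h\cdot m_{13}\cdot h^{-1}=(h\cdot m_{12})(m_{23}\cdot h^{-1})=\chi_{12}(h)\,(m_{12}\cdot h)(m_{23}\cdot h^{-1})=\chi_{12}(h)\,m_{12}(h\cdot m_{23})\cdot h^{-1},
\]
and then $h\cdot m_{23}=\chi_{23}(h)\,m_{23}\cdot h$ gives $h\cdot m_{13}\cdot h^{-1}=\chi_{12}(h)\chi_{23}(h)\,m_{13}$. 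Thus the cyclic sub-bimodule generated by $m_{13}$ carries the $H_{123}$-action by the scalar character $(\chi_{12}\chi_{23})|_{H_{123}}$. Decomposing $M_{13}$ into graded-simple sub-bimodules $[(\chi_k,g_k)]$, each summand is (by \Cref{irreducible} and \Cref{thm:gradedbimodule}) generated by a one-dimensional $\mathbb{F}H_{13}$-submodule whose $H_{13}$-character restricts to $\chi_k$ on $H_{13}\supseteq H_{123}$; but that character is also read off from the $H_{123}$-action on any nonzero homogeneous element of the summand, hence equals $(\chi_{12}\chi_{23})|_{H_{123}}$. Since the $m_{13}$-type elements $h_1 m_{13} h_3$ span $M_{13}$ and conjugation by $H_{123}$ acts on each by the same scalar character $(\chi_{12}\chi_{23})|_{H_{123}}$ (using that $G$ is abelian, so $h_1,h_3$ commute past $h\in H_{123}$), every graded-simple component must have $H_{123}$-character equal to $(\chi_{12}\chi_{23})|_{H_{123}}$, which is exactly the claim $\chi_k|_{H_{123}}=\chi_{12}|_{H_{123}}\chi_{23}|_{H_{123}}$.

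I expect the main obstacle to be the bookkeeping that ensures the scalar character really is read off consistently on \emph{every} graded-simple piece of $M_{13}$, rather than just on the single cyclic piece generated by $m_{12}m_{23}$: one must argue that spanning elements of the form $h_1\cdot(m_{12}m_{23})\cdot h_3$ lie in a single $H_{123}$-isotypic component, which uses commutativity of $G$ to move $h\in H_{123}$ past $h_1$ and $h_3$ without disturbing the scalar, together with the fact that an $\mathbb{F}H_{13}$-module generated by vectors all of the same $H_{123}$-character (a character of the central subgroup $H_{123}\subseteq Z(G)$) can only contain $H_{13}$-characters restricting to that one. The rest is the routine degree/character arithmetic above.
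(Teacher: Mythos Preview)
Your argument is correct in spirit and takes a genuinely different route from the paper. The paper's proof works on the poset side: it uses the explicit realization $X=\widehat{H_1}\dot\cup\widehat{H_2}\dot\cup\widehat{H_3}$ from \Cref{construction}, draws the commutative diagram of restriction maps among the $\widehat{H_i}$ and $\widehat{H_{ij}}$, picks $\eta_1\precsim\eta_3$ realizing $\chi_k$, inserts an intermediate $\eta_2$, and reads off the identity $\chi_k|_{H_{123}}=(\chi_{12}\chi_{23})|_{H_{123}}$ by chasing the diagram. Your proof is intrinsic to the bimodule category: you compute the conjugation action of $H_{123}$ on products and conclude that all of $M_{13}$ sits in a single $H_{123}$-isotypic component, forcing every $\chi_k$ to restrict correctly. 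Your approach is cleaner and does not depend on any particular poset model; the paper's approach has the virtue of dovetailing with the explicit constructions used in the later counting lemmas of \Cref{S_product}.

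One small correction: the elements $h_1\cdot m_{13}\cdot h_3$ need \emph{not} span $M_{13}$. What spans is $\{h_1 m_{12} h_2 m_{23} h_3 : h_i\in H_i\}$, since $M_{12}=\mathbb{F}H_1 m_{12}\mathbb{F}H_2$ and $M_{23}=\mathbb{F}H_2 m_{23}\mathbb{F}H_3$ and the $h_2$ in the middle cannot in general be absorbed. This does not damage your proof: for $h\in H_{123}$ one computes, using abelianness and the balanced property exactly as you did for $m_{13}$, that
\[
h\cdot(h_1 m_{12} h_2 m_{23} h_3)\cdot h^{-1}=\chi_{12}(h)\chi_{23}(h)\,(h_1 m_{12} h_2 m_{23} h_3),
\]
so every spanning vector is still an eigenvector with the same $H_{123}$-character, and the conclusion follows as you wrote.
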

\begin{proof}
Consider the following commutative diagram, given by the restriction maps:
\begin{center}
\begin{tikzpicture}
    \node (1) at (0,0) {$\widehat{H_1}$};
    \node (12) at (0,-2) {$\widehat{H_{12}}$};
    \node (2) at (2,-2) {$\widehat{H_2}$};
    \node (23) at (2,-4) {$\widehat{H_{23}}$};
    \node (3) at (4,-4) {$\widehat{H_3}$};
    \node (123) at (0,-4) {$\widehat{H_{123}}$};
    \node (13) at (-2,-6) {$\widehat{H_{13}}$};
    \draw[->] (1) -- node[right] {$p_{12}$} (12);
    \draw[->] (2) -- node[above] {$p_{21}$} (12);
    \draw[->] (2) -- node[right] {$p_{23}$} (23);
    \draw[->] (3) -- node[above] {$p_{32}$} (23);
    \draw[->] (23) -- node[above] {$p_{1}$} (123);
    \draw[->] (12) -- node[left] {$p_3$} (123);
    \draw[->] (1) -- node[left] {$p_{13}$} (13);
    \draw[->] (3) -- node[below] {$p_{31}$}(13);
    \draw[->] (13) -- node[right] {$p_2$} (123);
\end{tikzpicture}
\end{center}

Let $\chi_i\in[M_{13}]$ and $\eta_1\in\widehat{H_1}$. Let $\eta_3\in\widehat{H_3}$ be such that $p_{13}(\eta_1)=\chi_ip_{31}(\eta_3)$. Then $\eta_1\precsim\eta_3$. Let $\eta_2\in\widehat{H_2}$  be such that $\eta_1\precsim\eta_2$ and $\eta_2\precsim\eta_3$. It means that one has
\[
p_{12}(\eta_1)=\chi_{12}p_{21}(\eta_2),\quad p_{23}(\eta_2)=\chi_{23}p_{32}(\eta_3).
\]
Applying $p_3$ to the first equation, we obtain
\[
\underbrace{p_3\circ p_{12}(\eta_1)}_{p_2\circ p_{13}(\eta_1)}=p_3(\chi_{12}p_{21}(\eta_2))=p_3(\chi_{12})\underbrace{p_3\circ p_{21}(\eta_2)}_{p_1\circ p_{23}(\eta_2)}=p_3(\chi_{12})p_1(\chi_{23})p_1\circ p_{32}(\eta_3).
\]
Since $p_2\circ p_{13}(\eta_1)=p_2(\chi_ip_{31}(\eta_3))$, we get $\chi_i|_{H_{123}}=(\chi_{12}|_{H_{123}})(\chi_{23}|_{H_{123}})$.
\end{proof}

We use the construction of the poset given in \Cref{construction}. So $X=\widehat{H_1}\dot\cup\widehat{H_2}\dot\cup\widehat{H_3}$. As in \cite{SSY}, given $\eta\in X$, we denote
\begin{align*}
\ell(\widehat{H_i},\eta)&=|\{\eta_i\in\widehat{H_i}\mid\eta_i\precsim\eta\}|,\\%
\ell(\eta,\widehat{H_j})&=|\{\eta_j\in\widehat{H_j}\mid\eta\precsim\eta_j\}|,\\%
\ell(\widehat{H_i},\widehat{H_j})&=|\{(\eta_i,\eta_j)\mid\eta_i\in\widehat{H_i},\eta_j\in\widehat{H_j},\,\eta_i\precsim\eta_j\}|.
\end{align*}
Note that, if $i<j$, then $\dim_\mathbb{F}M_{ij}=\ell(\widehat{H_i},\widehat{H_j})$. Moreover, \cite[Lemma 16]{SSY} proves that, for any $\eta_i\in\widehat{H_i}$ and $\eta_j\in\widehat{H_j}$, one has
\begin{equation}\label{link}
\ell(\widehat{H_i},\widehat{H_j})=|H_i|\ell(\eta_i,\widehat{H_j})=|H_j|\ell(\widehat{H_i},\eta_j).
\end{equation}
In addition, if $M_{ij}$ is irreducible, then $\dim_\mathbb{F}M_{ij}=\frac{|H_i||H_j|}{|H_{ij}|}$ (\cite[Theorem 2]{SSY}).

We start proving the result by imposing a particular restriction.
\begin{Lemma}\label{preliminarycase}
For $i=1,2$, assume that $M_{i,i+1}\cong_G[(\chi_{i,i+1},g_{i,i+1})]$. If 
\[
\ell (\widehat{H_1}, \widehat{H_3}) \ge \frac{|H_1||H_3|}{|H_{123}|},
\]
then
\[
M_{13}\cong_G[(\chi,g_{12}g_{23})\mid%
\chi|_{H_{123}} = \chi_{12}|_{H_{123}} \chi_{23}|_{H_{123}}].
\]
\end{Lemma}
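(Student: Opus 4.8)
The plan is to realize $M_{13}$ inside the incidence algebra on $X=\widehat{H_1}\dot\cup\widehat{H_2}\dot\cup\widehat{H_3}$ produced by the construction, decompose it according to the $\mathbb{F}H_{13}$-action $h\mapsto L_h\circ R_{h^{-1}}$ (the Yetter-Drinfel'd structure of \Cref{thm:gradedbimodule}), and use the hypothesis on $\ell(\widehat{H_1},\widehat{H_3})$ to force each isotypic block to be a single graded-irreducible bimodule carrying the predicted character and degree. The first point is a dimension bound: exactly as in the proof of \Cref{firstlemma}, the commutative diagram of restriction maps shows that $\eta_1\precsim\eta_3$ forces $\eta_1|_{H_{123}}=(\chi_{12}|_{H_{123}})(\chi_{23}|_{H_{123}})\,\eta_3|_{H_{123}}$. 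Since $\widehat{H_1}\to\widehat{H_{123}}$ is onto with fibres of size $|H_1|/|H_{123}|$, summing over $\eta_3\in\widehat{H_3}$ gives $\ell(\widehat{H_1},\widehat{H_3})\le|H_1||H_3|/|H_{123}|$; with the hypothesis this is an equality, so $\dim_{\mathbb{F}}M_{13}=|H_1||H_3|/|H_{123}|$ and, moreover, the restriction identity becomes an equivalence: $\eta_1\precsim\eta_3$ iff $\eta_1|_{H_{123}}=(\chi_{12}\chi_{23})|_{H_{123}}\,\eta_3|_{H_{123}}$.

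Next I would decompose $M_{13}$ isotypically. Since $G$ is abelian, $H_{13}\subseteq Z(G)$, so each homogeneous component of $M_{13}$ is stable under the $\mathbb{F}H_{13}$-action and the $\mathbb{F}H_1$- and $\mathbb{F}H_3$-actions commute with it; hence $M_{13}=\bigoplus_{\chi}M_{13}[\chi]$ is a decomposition into graded sub-bimodules. On the matrix units the action is scalar, $h\cdot e_{\eta_1\eta_3}=\big(\eta_1|_{H_{13}}\cdot(\eta_3|_{H_{13}})^{-1}\big)(h)\,e_{\eta_1\eta_3}$, so $e_{\eta_1\eta_3}\in M_{13}[\chi]$ with $\chi=\eta_1|_{H_{13}}(\eta_3|_{H_{13}})^{-1}$; restricting this $\chi$ to $H_{123}$ and using the equivalence above, the characters that occur are precisely those in $S:=\{\chi\in\widehat{H_{13}}\mid\chi|_{H_{123}}=(\chi_{12}\chi_{23})|_{H_{123}}\}$, and a direct count (for fixed $\eta_3$ the equation $\eta_1|_{H_{13}}=\chi\,\eta_3|_{H_{13}}$ has $|H_1|/|H_{13}|$ solutions, each forcing $\eta_1\precsim\eta_3$) gives $\dim M_{13}[\chi]=|H_1||H_3|/|H_{13}|$ for every $\chi\in S$, in agreement with $|S|\cdot\tfrac{|H_1||H_3|}{|H_{13}|}=\tfrac{|H_1||H_3|}{|H_{123}|}$.

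Then I would identify each block. By \Cref{thm:gradedbimodule} together with \Cref{gradedMashcke}, \Cref{lem3} and \Cref{irreducible}, $M_{13}[\chi]$ is a direct sum of graded-irreducible $(\mathbb{F}H_1,\mathbb{F}H_3)$-bimodules, each of dimension $|H_1||H_3|/|H_{13}|$ by \cite[Theorem 2]{SSY}, so the preceding count forces exactly one summand: $M_{13}[\chi]$ is graded-irreducible, say $M_{13}[\chi]\cong_G[(\chi',g')]$. A direct computation on $\mathbb{F}(H_1\times H_3)\otimes_{\mathbb{F}H_{13}}\mathcal{V}$ (using that $G$ is abelian) shows $[(\chi',g')]$ is $\chi'$-isotypic for the $H_{13}$-action, whereas $M_{13}[\chi]$ is $\chi$-isotypic, so $\chi'=\chi$. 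For the degree, let $a_{12}=\sum_{\eta_1\precsim\eta_2}e_{\eta_1\eta_2}$ and $a_{23}=\sum_{\eta_2\precsim\eta_3}e_{\eta_2\eta_3}$ be the images of the generators of $M_{12}$, $M_{23}$ (homogeneous of degrees $g_{12}$, $g_{23}$): then $a_{12}a_{23}=\sum_{\eta_1\precsim\eta_2\precsim\eta_3}e_{\eta_1\eta_3}$ is homogeneous of degree $g_{12}g_{23}$, and its projection into $M_{13}[\chi]$ is homogeneous of the same degree and nonzero, being a positive combination of the linearly independent $e_{\eta_1\eta_3}$ with $\eta_1\precsim\eta_3$ and character $\chi$ (such pairs exist by the previous step). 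Hence $g_{12}g_{23}\in\mathrm{Supp}\,M_{13}[\chi]$, so \Cref{isogradedbimodules} gives $[(\chi,g')]\cong_G[(\chi,g_{12}g_{23})]$, and summing over $\chi\in S$ yields the claimed description of $M_{13}$.

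The main obstacle is that the grading produced by the construction is not the one in which the matrix units $e_{\eta_1\eta_3}$ are homogeneous, so a homogeneous element of the expected degree only appears after projecting onto an $H_{13}$-isotypic component; the crux is therefore the last step — exhibiting the explicit homogeneous element $a_{12}a_{23}$ and checking that it meets every block $M_{13}[\chi]$ with $\chi\in S$ — while the dimension bound and the identification of the characters are essentially bookkeeping built on \Cref{firstlemma} and \cite[Theorem 2]{SSY}.
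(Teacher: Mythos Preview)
Your argument is correct, but it follows a genuinely different route from the paper's. The paper's proof is a two-line counting argument: decompose $M_{13}$ into $t$ graded-irreducible summands, each of dimension $|H_1||H_3|/|H_{13}|$ by \cite[Theorem~2]{SSY}; the hypothesis then gives $t\ge |H_{13}|/|H_{123}|$, while \Cref{firstlemma} together with the fact that the characters occurring in $[M_{13}]$ are pairwise distinct (this is built into the incidence-algebra realization, \cite[Theorems~1 and~2]{SSY}) forces $t\le |H_{13}|/|H_{123}|$. Equality pins down the character list, and the degree is left implicit. Your approach instead works inside the explicit model $I(X)$ on $X=\widehat{H_1}\dot\cup\widehat{H_2}\dot\cup\widehat{H_3}$: you first prove the reverse inequality $\ell(\widehat{H_1},\widehat{H_3})\le |H_1||H_3|/|H_{123}|$ and hence equality, then identify the $H_{13}$-isotypic components $M_{13}[\chi]$ directly on matrix units, count their dimensions, and finally exhibit the homogeneous element $a_{12}a_{23}$ of degree $g_{12}g_{23}$ whose projection to every $M_{13}[\chi]$ is nonzero.

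What each approach buys: the paper's squeeze argument is shorter and avoids any computation with matrix units, but it silently uses that characters in $[M_{13}]$ do not repeat, and it does not actually justify the degree claim $\deg\chi=g_{12}g_{23}$ (one still has to argue that $g_{12}g_{23}$, rather than some $g_{12}g_{23}h_2$ with $h_2\in H_2\setminus H_1H_3$, lies in $\mathrm{Supp}\,M_{13}[\chi]$). Your explicit construction of $a_{12}a_{23}$ and the observation that its $\chi$-component is a nonempty positive combination of the $e_{\eta_1\eta_3}$ with character $\chi$ closes exactly this gap. So your proof is longer but more self-contained on the degree side; the paper's is more elegant for the characters but leans on an implicit step you have made explicit.
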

\begin{proof}
As remarked above, and from the hypothesis, one has
\[
\dim_\mathbb{F}M_{13}=\ell(\widehat{H_1},\widehat{H_3})\ge\frac{|H_1||H_3|}{|H_{123}|}.
\]
On the other hand, we can decompose $M_{13}=M_1\oplus\cdots\oplus M_t$, as a sum of graded-irreducible bimodules. From \cite[Theorem 2]{SSY}, one has $\dim_\mathbb{F}M_i=\frac{|H_1||H_3|}{|H_{13}|}$, for each $i$. Hence,
\[
\frac{|H_1||H_3|}{|H_{123}|}\le\dim_\mathbb{F}M_{13}=t\frac{|H_1||H_3|}{|H_{13}|}.
\]
It means that $t\ge\frac{|H_{13}|}{|H_{123}|}$. On the other hand, from \Cref{firstlemma}, each $\chi\in[M_{13}]$ belongs to the set $\{\chi\in\widehat{H_{13}}\mid\chi|_{H_{123}}=\left(\chi_{12}|_{H_{123}}\right)\left(\chi_{23}|_{H_{123}}\right)\}$, so $t\le|\{\chi\in\widehat{H_{13}}\mid\chi|_{H_{123}}=\left(\chi_{12}|_{H_{123}}\right)\left(\chi_{23}|_{H_{123}}\right)\}|$. However, the above set is a translation of the kernel of the restriction map $\widehat{H_{13}}\to\widehat{H_{123}}$. Hence,
\[
|\{\chi\in\widehat{H_{13}}\mid\chi|_{H_{123}}=\left(\chi_{12}|_{H_{123}}\right)\left(\chi_{23}|_{H_{123}}\right)\}|=\frac{|H_{13}|}{|H_{123}|}\le t.
\]
Thus, the above is an equality, and every element of $\{\chi\in\widehat{H_{13}}\mid\chi|_{H_{123}}=\left(\chi_{12}|_{H_{123}}\right)\left(\chi_{23}|_{H_{123}}\right)\}$ should appear in $[M_{13}]$.
\end{proof}

As a consequence, we obtain:

\begin{Lemma}\label{case1}
If $H_3\subseteq H_2$, then
\begin{align*}
M_{13}\cong_G[(\chi,(\deg\chi_{12})(\deg\chi_{23}))\mid\, &
\chi|_{H_{123}} = (\chi_{12}|_{H_{123}})(\chi_{23}|_{H_{123}}),\\&\chi_{12}\in[M_{12}],\chi_{23}\in[M_{23}]].
\end{align*}
\end{Lemma}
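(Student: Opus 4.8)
The plan is to derive this from \Cref{preliminarycase} by splitting $M_{12}$ and $M_{23}$ into graded-simple summands. The first observation is that $H_3\subseteq H_2$ forces $H_{23}=H_3$ and, since then $H_1\cap H_3\subseteq H_2$, also $H_{13}=H_{123}$; in particular $\widehat{H_{13}}=\widehat{H_{123}}$, so a character of $H_{13}$ is determined by its restriction to $H_{123}$, and consequently the conclusion of \Cref{preliminarycase} will specialize to a single graded-simple bimodule. So I would write $M_{12}=\bigoplus_{\chi_{12}\in[M_{12}]}A_{\chi_{12}}$ and $M_{23}=\bigoplus_{\chi_{23}\in[M_{23}]}B_{\chi_{23}}$ with $A_{\chi_{12}}\cong_G[(\chi_{12},\deg\chi_{12})]$ and $B_{\chi_{23}}\cong_G[(\chi_{23},\deg\chi_{23})]$ graded-simple, let $A_{\chi_{12}}B_{\chi_{23}}\subseteq M_{13}$ be the graded sub-bimodule spanned by the products, and observe that, since multiplication in $I(X)$ induces a surjection $M_{12}\times M_{23}\to M_{13}$ (see the opening of this section), one has $M_{13}=\sum_{\chi_{12},\chi_{23}}A_{\chi_{12}}B_{\chi_{23}}$. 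It then suffices to identify each summand.

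Fixing $\chi_{12}$ and $\chi_{23}$, I would use the explicit model of $I(X)$ from \Cref{construction}: there $A_{\chi_{12}}$ is the span of the matrix units $e_{\eta_1\eta_2}$ (with $\eta_1\in\widehat{H_1}$, $\eta_2\in\widehat{H_2}$) satisfying $p_{12}(\eta_1)=\chi_{12}\,p_{21}(\eta_2)$, and $B_{\chi_{23}}$ the span of the $e_{\eta_2\eta_3}$ satisfying $p_{23}(\eta_2)=\chi_{23}\,p_{32}(\eta_3)$. Hence the triangular algebra $\left(\begin{smallmatrix}\mathbb{F}H_1&A_{\chi_{12}}&A_{\chi_{12}}B_{\chi_{23}}\\&\mathbb{F}H_2&B_{\chi_{23}}\\&&\mathbb{F}H_3\end{smallmatrix}\right)$ is a graded subalgebra of $I(X)$ spanned by matrix units, so it is (isomorphic to) a graded incidence algebra $I(Y)$ of a poset $Y$, again of the form treated in this section, with $A_{\chi_{12}}$, $B_{\chi_{23}}$ graded-simple. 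Next I would check that $A_{\chi_{12}}B_{\chi_{23}}\ne 0$: picking any $\eta_2\in\widehat{H_2}$ and using surjectivity of $p_{12}$ and $p_{32}$ produces $\eta_1$, $\eta_3$ with $e_{\eta_1\eta_2}\in A_{\chi_{12}}$, $e_{\eta_2\eta_3}\in B_{\chi_{23}}$, so that $e_{\eta_1\eta_3}=e_{\eta_1\eta_2}e_{\eta_2\eta_3}\in A_{\chi_{12}}B_{\chi_{23}}$. Since graded bimodules are completely reducible, a nonzero one contains a graded-simple sub-bimodule, which by \cite[Theorem 2]{SSY} has dimension $|H_1||H_3|/|H_{13}|=|H_1||H_3|/|H_{123}|$; thus $\dim_{\mathbb{F}}A_{\chi_{12}}B_{\chi_{23}}\ge|H_1||H_3|/|H_{123}|$, which is precisely the numerical hypothesis of \Cref{preliminarycase} for $I(Y)$. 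Applying that lemma, and using $H_{13}=H_{123}$ to see that the only character of $H_{13}$ restricting to $\chi_{12}|_{H_{123}}\chi_{23}|_{H_{123}}$ is $(\chi_{12}|_{H_{13}})(\chi_{23}|_{H_{13}})$, I would conclude that $A_{\chi_{12}}B_{\chi_{23}}$ is graded-simple with character $(\chi_{12}|_{H_{13}})(\chi_{23}|_{H_{13}})$ and degree $(\deg\chi_{12})(\deg\chi_{23})$.

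Summing over all pairs and using $M_{13}=\sum A_{\chi_{12}}B_{\chi_{23}}$ then shows that the graded-simple constituents of $M_{13}$ are exactly the bimodules with parameters $\bigl((\chi_{12}|_{H_{13}})(\chi_{23}|_{H_{13}}),\,(\deg\chi_{12})(\deg\chi_{23})\bigr)$ for $\chi_{12}\in[M_{12}]$, $\chi_{23}\in[M_{23}]$; since $\widehat{H_{13}}=\widehat{H_{123}}$, the condition $\chi|_{H_{123}}=\chi_{12}|_{H_{123}}\chi_{23}|_{H_{123}}$ amounts to $\chi=(\chi_{12}|_{H_{13}})(\chi_{23}|_{H_{13}})$, which is the stated description. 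If one wants the displayed sum to be manifestly direct (equivalently, distinct pairs to yield non-isomorphic constituents), one can remark that $\left(\begin{smallmatrix}\mathbb{F}H_1&M_{13}\\&\mathbb{F}H_3\end{smallmatrix}\right)$ is itself a graded incidence algebra, so $M_{13}$ is multiplicity-free by \cite[Theorems 1 and 2]{SSY}.

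The step I expect to be the main obstacle is the reduction itself: recognizing the triangular algebra built from two graded-simple summands and their product as a genuine graded incidence algebra to which \Cref{preliminarycase} applies, and then verifying its numerical hypothesis. This is the place where $H_3\subseteq H_2$ is essential — it gives $H_{13}=H_{123}$, which simultaneously collapses the conclusion of \Cref{preliminarycase} to a single simple bimodule and makes the bound $|H_1||H_3|/|H_{123}|$ automatic as soon as $A_{\chi_{12}}B_{\chi_{23}}$ is known to be nonzero.
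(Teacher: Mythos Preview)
Your proof is correct and follows the same strategy as the paper: reduce to graded-simple $M_{12}$ and $M_{23}$, use $H_3\subseteq H_2$ to obtain $H_{13}=H_{123}$ so that a single graded-simple sub-bimodule of $M_{13}$ already realizes the bound $|H_1||H_3|/|H_{123}|$, and then invoke \Cref{preliminarycase}. The paper compresses the reduction into the phrase ``we may assume'' and tacitly uses the standing hypothesis $M_{13}\ne0$, whereas you spell out the sub-incidence-algebra $I(Y)$ and verify $A_{\chi_{12}}B_{\chi_{23}}\ne0$ directly; these are extra details rather than a different argument.
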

\begin{proof}
From the bilinearity of the product and \Cref{firstlemma}, we may assume that $M_{i,i+1}\cong_G[(\chi_{i,i+1},g_{i,i+1})]$, for $i=1,2$. Since $H_3\subseteq H_2$, one has $H_{123}=H_{13}$. Let $M\subseteq M_{13}$ be a graded-irreducible bimodule. From the previous discussion, we obtain
\[
\dim_\mathbb{F}M_{13}\ge\dim_\mathbb{F}M=\frac{|H_1||H_3|}{|H_{13}|}=\frac{|H_1||H_3|}{|H_{123}|}.
\]
Hence, the result follows from \Cref{preliminarycase}.
\end{proof}

Now, we will investigate another situation where the result is valid.
\begin{Lemma}\label{case2}
If $H_2\subseteq H_3$, then
\begin{align*}
M_{13}\cong_G[(\chi,(\deg\chi_{12})(\deg\chi_{23}))\mid &
\chi|_{H_{123}} = (\chi_{12}|_{H_{123}})(\chi_{23}|_{H_{123}}),\\&\chi_{12}\in[M_{12}],\chi_{23}\in[M_{23}]].
\end{align*}
\end{Lemma}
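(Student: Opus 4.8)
The plan is to mimic the structure of the proof of \Cref{case1}, reducing everything to the key dimension inequality of \Cref{preliminarycase}, but now using the hypothesis $H_2\subseteq H_3$ in place of $H_3\subseteq H_2$. As before, by bilinearity of the product $M_{12}\times M_{23}\to M_{13}$ and \Cref{firstlemma}, it suffices to treat the case where each $M_{i,i+1}\cong_G[(\chi_{i,i+1},g_{i,i+1})]$ is graded-irreducible; the general statement follows by summing over the graded-irreducible constituents of $M_{12}$ and $M_{23}$. So from now on assume $M_{12}$ and $M_{23}$ are graded-irreducible.

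The heart of the argument is to verify the hypothesis of \Cref{preliminarycase}, namely $\ell(\widehat{H_1},\widehat{H_3})\ge\frac{|H_1||H_3|}{|H_{123}|}$. When $H_2\subseteq H_3$ one has $H_{123}=H_{12}$ and $H_{23}=H_2$. The idea is to count, using the poset $X=\widehat{H_1}\dot\cup\widehat{H_2}\dot\cup\widehat{H_3}$ from \Cref{construction}: since $M_{23}$ is graded-irreducible, $\dim_\mathbb{F}M_{23}=\ell(\widehat{H_2},\widehat{H_3})=\frac{|H_2||H_3|}{|H_{23}|}=|H_3|$, which by \eqref{link} forces $\ell(\widehat{H_2},\eta_3)=|H_2|$ for every $\eta_3\in\widehat{H_3}$; in other words, \emph{every} $\eta_2\in\widehat{H_2}$ sits below each fixed $\eta_3$ (under the appropriate $\precsim_j$). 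Likewise $M_{12}$ graded-irreducible gives $\ell(\widehat{H_1},\eta_2)=\frac{|H_1|}{|H_{12}|}$ for every $\eta_2\in\widehat{H_2}$. Now fix $\eta_3\in\widehat{H_3}$ and count the pairs $\eta_1\precsim\eta_2\precsim\eta_3$: there are $|H_2|$ choices of such $\eta_2$, and for each one $\frac{|H_1|}{|H_{12}|}$ choices of $\eta_1$ below it. Composability of the relation $\precsim$ (i.e. $\eta_1\precsim\eta_2$ and $\eta_2\precsim\eta_3$ imply $\eta_1\precsim\eta_3$, which holds because $\precsim_j\circ\precsim_k$ lands in some $\precsim_\ell$ by \Cref{firstlemma}) then shows each such $\eta_1$ satisfies $\eta_1\precsim\eta_3$. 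Hence $\ell(\widehat{H_1},\eta_3)\ge\frac{|H_1|}{|H_{12}|}$ after accounting for how many $\eta_2$ produce a given $\eta_1$, and multiplying by $|H_3|$ via \eqref{link} yields
\[
\ell(\widehat{H_1},\widehat{H_3})=|H_3|\,\ell(\widehat{H_1},\eta_3)\ge|H_3|\cdot\frac{|H_1|}{|H_{12}|}=\frac{|H_1||H_3|}{|H_{123}|}.
\]
Once this inequality is established, \Cref{preliminarycase} immediately gives $M_{13}\cong_G[(\chi,g_{12}g_{23})\mid \chi|_{H_{123}}=\chi_{12}|_{H_{123}}\chi_{23}|_{H_{123}}]$, and unwinding $g_{i,i+1}=\deg\chi_{i,i+1}$ and re-summing over constituents produces the stated formula.

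The step I expect to be the main obstacle is the bookkeeping in the chain count: specifically, showing that distinct intermediate $\eta_2$'s do not overcount the $\eta_1$'s below $\eta_3$, or more precisely getting a clean lower bound on $\ell(\widehat{H_1},\eta_3)$ rather than on the number of chains $\eta_1\precsim\eta_2\precsim\eta_3$. The cleanest way around this may be to count chains and then divide, i.e. show the number of triples is at least $|H_2|\cdot\frac{|H_1|}{|H_{12}|}$ and that each fixed $\eta_1\precsim\eta_3$ participates in at most $|H_{23}/H_{123}|=|H_2/H_{12}|$ such triples (the number of $\eta_2$ with $p_{21}(\eta_2)$ in a fixed coset), so that $\ell(\widehat{H_1},\eta_3)\ge\frac{|H_2|}{|H_2/H_{12}|}\cdot\frac{|H_1|}{|H_{12}|}=\frac{|H_1|}{|H_{12}|}\cdot\frac{|H_{12}|}{1}\cdot\frac{1}{|H_{12}|}$... one must be careful with the index arithmetic here, and checking these multiplicities against the defining relations $p_{12}(\eta_1)=\chi_{12}p_{21}(\eta_2)$, $p_{23}(\eta_2)=\chi_{23}p_{32}(\eta_3)$ is exactly where the hypothesis $H_2\subseteq H_3$ does its work (it makes $p_{32}$ injective, pinning down $\eta_3$ from $\eta_2$, which is the asymmetry distinguishing this case from \Cref{case1}). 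Everything else — the reduction to the irreducible case, the appeal to \eqref{link}, and the final invocation of \Cref{preliminarycase} — is routine.
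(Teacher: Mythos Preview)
Your overall strategy matches the paper's: reduce to graded-irreducible $M_{12}$ and $M_{23}$, verify the dimension hypothesis of \Cref{preliminarycase} via a chain count in the poset $X=\widehat{H_1}\dot\cup\widehat{H_2}\dot\cup\widehat{H_3}$, and conclude. But the execution of the count contains a genuine error that also explains why you find the bookkeeping obstacle.

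You claim that \eqref{link} gives $\ell(\widehat{H_2},\eta_3)=|H_2|$. It does not: \eqref{link} reads $\ell(\widehat{H_2},\widehat{H_3})=|H_3|\,\ell(\widehat{H_2},\eta_3)$, and since $\ell(\widehat{H_2},\widehat{H_3})=|H_3|$ you get $\ell(\widehat{H_2},\eta_3)=1$, not $|H_2|$. (You seem to have swapped the two halves of \eqref{link}.) The correct way to see this directly, and the point the paper uses, is that $H_2\subseteq H_3$ forces $H_{23}=H_2$, so the restriction $p_2:\widehat{H_2}\to\widehat{H_{23}}$ is the \emph{identity}; the defining relation $p_2(\eta_2)=\chi_{23}\,p_3(\eta_3)$ then reads $\eta_2=\chi_{23}\,p_3(\eta_3)$, which pins down $\eta_2$ uniquely from $\eta_3$ (not the other way around, as you wrote at the end --- $p_{32}$ is surjective, not injective, when $H_2\subsetneq H_3$).

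This single observation dissolves your overcounting worry entirely. Since each $\eta_3$ has exactly one $\eta_2$ below it, every relation $\eta_1\precsim\eta_3$ factors through a \emph{unique} chain $\eta_1\precsim\eta_2\precsim\eta_3$, and the count is an equality rather than an inequality:
\[
\ell(\widehat{H_1},\widehat{H_3})=\sum_{\eta_2\in\widehat{H_2}}\ell(\widehat{H_1},\eta_2)\,\ell(\eta_2,\widehat{H_3})=\ell(\widehat{H_1},\widehat{H_2})\cdot\frac{|H_3|}{|H_2|}=\frac{|H_1||H_2|}{|H_{12}|}\cdot\frac{|H_3|}{|H_2|}=\frac{|H_1||H_3|}{|H_{123}|}.
\]
No division by multiplicities is needed. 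With this correction the rest of your outline (invoke \Cref{preliminarycase}, then reassemble over constituents) is exactly the paper's proof.
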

\begin{proof}
Since $H_2\subseteq H_3$, one has $H_{12}=H_{123}$. Again, we may assume that $M_{i,i+1}\cong_G[(\chi_{i,i+1},g_{i,i+1})]$, for $i=1,2$. For $i=2,3$, denote by $p_i:\widehat{H_i}\to\widehat{H_{23}}$ the restriction map. Since $H_3\subseteq H_2$, one has $H_{23}=H_2$; so $\widehat{H_2}=\widehat{H_{23}}$. Then, $p_2$ is the identity map. If $\eta_3\in\widehat{H_3}$ and $\eta_2$, $\eta_2'\in\widehat{H_2}$ are such that $\eta_2\precsim\eta_3$ and $\eta_2'\precsim\eta_3$, then
\[
\eta_2=p_2(\eta_2)=\chi_{23}p_3(\eta_3)=p_2(\eta_2')=\eta_2'.
\]
It means that $\ell(\widehat{H_2},\eta_3)=1$. Thus, \eqref{link} gives $\ell(\eta_2,\widehat{H_3})=|H_3|/|H_2|$. Now, for each $\eta_1\precsim\eta_3$ ($\eta_i\in\widehat{H_i}$), there exists a unique $\eta_2\in\widehat{H_2}$ such that $\eta_1\precsim\eta_2\precsim\eta_3$. Hence,
\begin{align*}
\ell(\widehat{H_1},\widehat{H_3})&=\sum_{\eta_2\in\widehat{H_2}}\ell(\widehat{H_1},\eta_2)\ell(\eta_2,\widehat{H_3})=\ell(\widehat{H_1},\widehat{H_2})\ell(\eta_2,\widehat{H_3})\\
&=\frac{|H_1||H_2|}{|H_{12}|}\frac{|H_3|}{|H_2|}=\frac{|H_1||H_3|}{|H_{123}|}
\end{align*}
The result follows from \Cref{preliminarycase}.
\end{proof}

Next, we need to construct an auxiliary result.

\begin{Lemma}\label{increasedposet}
Let $G$ be an abelian group, $H_1$, $H_2\subseteq G$ finite subgroups, and $M$ a $G$-graded $(\mathbb{F}H_1,\mathbb{F}H_2)$-bimodule, where the characters that appear in the parametrization of $M$ are pairwise distinct. Then, there exists a poset $X'$ and a $G$-grading on $X'$ such that
\[
I(X')\cong_G\left(\begin{array}{ccc}\mathbb{F}H_1&M'&M\\&\mathbb{F}H_{12}&\mathds{1}\\&&\mathbb{F}H_2\end{array}\right),
\]
where $M'$ is the $G$-graded $(\mathbb{F}H_1,\mathbb{F}H_{12})$-bimodule having the same characters as $M$, and $\mathds{1}\cong_G[(1,1)]$ as $G$-graded $(\mathbb{F}H_{12},\mathbb{F}H_2)$-bimodules, where the first $1:H_{12}\to\mathbb{F}^\times$ denotes the trivial character.
\end{Lemma}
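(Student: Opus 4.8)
The plan is to construct $X'$ by refining the poset $X$ produced in \Cref{construction} for the $(\mathbb{F}H_1,\mathbb{F}H_2)$-bimodule $M$. Recall that there $X=\widehat{H_1}\dot\cup\widehat{H_2}$ with $\eta_1\precsim\eta_2$ (for $\eta_1\in\widehat{H_1}$, $\eta_2\in\widehat{H_2}$) iff $p_1(\eta_1)=\chi_j p_2(\eta_2)$ for some $j$, where $M\cong_G[(\chi_1,g_1),\ldots,(\chi_t,g_t)]$. I would insert a middle layer indexed by $\widehat{H_{12}}$: set $X'=\widehat{H_1}\dot\cup\widehat{H_{12}}\dot\cup\widehat{H_2}$. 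Write $q_1:\widehat{H_1}\to\widehat{H_{12}}$ and $q_2:\widehat{H_2}\to\widehat{H_{12}}$ for the restriction maps. Define the order on $X'$ as follows: for $\mu\in\widehat{H_{12}}$ and $\eta_2\in\widehat{H_2}$, declare $\mu\precsim\eta_2$ iff $\mu=q_2(\eta_2)$ (this is the ``$\mathds{1}$'' layer, a bimodule with single character the trivial one, degree $1$); for $\eta_1\in\widehat{H_1}$ and $\mu\in\widehat{H_{12}}$, declare $\eta_1\precsim_j\mu$ iff $q_1(\eta_1)=\chi_j\mu$, and $\eta_1\precsim\mu$ iff this holds for some $j$ (this is $M'$, carrying the same characters $\chi_1,\ldots,\chi_t$ but now as an $(\mathbb{F}H_1,\mathbb{F}H_{12})$-bimodule); and for $\eta_1\in\widehat{H_1}$, $\eta_2\in\widehat{H_2}$ declare $\eta_1\precsim\eta_2$ iff $\eta_1\precsim q_2(\eta_2)$, i.e. $q_1(\eta_1)=\chi_j q_2(\eta_2)$ for some $j$. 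One checks transitivity: a chain $\eta_1\precsim\mu\precsim\eta_2$ forces $\mu=q_2(\eta_2)$ and $q_1(\eta_1)=\chi_j\mu=\chi_j q_2(\eta_2)$, which is exactly the relation $\eta_1\precsim\eta_2$; since $p_1=q_1$ and $p_2=q_2$ on the relevant groups, this is precisely the order of the original $X$ restricted to $\widehat{H_1}\dot\cup\widehat{H_2}$, so the ``top corner'' of $I(X')$ is $M$.

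Next I would put the $G$-grading on $I(X')$ by the same device as in \Cref{construction}: define $\psi:\mathcal{A}'\to I(X')$ on the diagonal by $\psi(h_1)=\sum_{\eta_1}\eta_1(h_1)e_{\eta_1}$, $\psi(u)=\sum_{\mu}\mu(u)e_\mu$ for $u\in H_{12}$, $\psi(h_2)=\sum_{\eta_2}\eta_2(h_2)e_{\eta_2}$; on $M'$ by $\psi(h m'_j u)=\sum_{\eta_1\precsim_j\mu}\eta_1(h)\mu(u)e_{\eta_1\mu}$; on $\mathds{1}$ by $\psi(u\cdot 1\cdot h_2)=\sum_{\mu\precsim\eta_2}\mu(u)\eta_2(h_2)e_{\mu\eta_2}$ (note $\mu=q_2(\eta_2)$, so $\mu(u)=\eta_2(u)$ for $u\in H_{12}$, which makes the $\mathbb{F}H_{12}$-action on $\mathds{1}$ the trivial-character one); and on $M$ by $\psi(h m_j k)=\sum_{\eta_1\precsim\eta_2}\eta_1(h)\eta_2(k)e_{\eta_1\eta_2}$ as before. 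Well-definedness reduces, exactly as in the proof of \Cref{construction}, to checking the balancing identities $\psi(u)\psi(m'_j)=\chi_j(u)\psi(m'_j)\psi(u)$ on $H_{12}$ for the $M'$-part, the trivial identity $\psi(u)\psi(\mathds{1})=\psi(\mathds{1})\psi(u)$ for the $\mathds{1}$-part, and $\psi(u)\psi(m_j)=\chi_j(u)\psi(m_j)\psi(u)$ for $M$ — each a one-line character computation of the type already carried out. The crucial compatibility is that the composite $M'\cdot\mathds{1}\to M$ under $\psi$ matches the algebra product in $I(X')$: since $e_{\eta_1\mu}e_{\mu\eta_2}=e_{\eta_1\eta_2}$ and $\eta_1\precsim_j\mu\precsim\eta_2$ forces $\eta_1\precsim_j\eta_2$ (with each such pair $(\eta_1,\eta_2)$ having a \emph{unique} intermediate $\mu=q_2(\eta_2)$), the product $\psi(h m'_j u)\psi(u'\cdot 1\cdot k)$ telescopes to $\psi(h m_j k)$ up to the scalar $\mu(u u')$, which is absorbed correctly because the $\mathbb{F}H_{12}$-actions on $M'$ and on $M$ use the same character $\chi_j$ and the $\mathds{1}$-action is trivial.

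Finally I would verify that $\psi$ is an algebra isomorphism: it is bijective because on the diagonal it is the usual isomorphism $\mathbb{F}H_i\cong\mathbb{F}^{\widehat{H_i}}$ (using that $\mathbb{F}$ is a splitting field, which is part of the standing hypothesis via \Cref{thm:gradedbimodule}/\Cref{realization}), and on each off-diagonal block it is a bijection onto the span of the relevant $e_{\eta\eta'}$ by the dimension counts $\ell(\widehat{H_1},\widehat{H_{12}})$, $\ell(\widehat{H_{12}},\widehat{H_2})$, $\ell(\widehat{H_1},\widehat{H_2})$ matching $\dim M'$, $\dim\mathds{1}$, $\dim M$; multiplicativity holds block by block, with the only nontrivial cross term $M'\cdot\mathds{1}\to M$ handled above and all other products of two strict-upper pieces landing either in the expected block or in $0$. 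The $G$-grading transported from $\mathcal{A}'$ then makes $I(X')$ graded-isomorphic to the displayed triangular algebra. The main obstacle, and the only place real care is needed, is the transitivity/uniqueness bookkeeping for the inserted middle layer: one must confirm that every covering pair $\eta_1\precsim\eta_2$ in the ``$M$'' block factors through exactly one $\mu\in\widehat{H_{12}}$ (namely $q_2(\eta_2)$), so that the product map $M'\otimes_{\mathbb{F}H_{12}}\mathds{1}\to M$ is the expected isomorphism of bimodules rather than a proper quotient or a map with multiplicity — this is what guarantees $M'$ has ``the same characters as $M$'' and that the top corner is genuinely $M$ and not some further quotient.
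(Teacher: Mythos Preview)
Your proposal is correct and follows essentially the same construction as the paper: the same poset $X'=\widehat{H_1}\dot\cup\widehat{H_{12}}\dot\cup\widehat{H_2}$ with the same order relations. The only difference is packaging: the paper obtains the relations on the two ``small'' blocks by invoking \Cref{construction} twice and then identifies the top corner $M_{13}$ with $M$ by appealing to \Cref{case2} (the case $H_{12}\subseteq H_2$ forces $H_{123}=H_{13}=H_{12}$, so the characters of $M_{13}$ are exactly the $\chi_j\cdot 1=\chi_j$ with degrees $g_j\cdot 1=g_j$), whereas you unroll all of this by hand, checking transitivity and the product $M'\cdot\mathds{1}\to M$ directly via the uniqueness of the intermediate $\mu=q_2(\eta_2)$.
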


\begin{proof}
We let $X'=\widehat{H_1}\dot\cup\widehat{H_{12}}\dot\cup\widehat{H_2}$. Let $\widehat{K_1}=\widehat{H_1}$, $\widehat{K_2}=\widehat{H_{12}}$ and $\widehat{K}_3=\widehat{H_2}$. For $i<j$, we will denote
\[
M_{ij}=\mathrm{Span}\{e_{\eta_i\eta_j}\mid\eta_i\in\widehat{K_i},\eta_j\in\widehat{K_j},\,\eta_i\precsim\eta_j\}.
\]
We give the $G$-grading on $I(X')$ in such a way that the diagonal part of $I(X')$ is $\mathbb{F}H_1\oplus\mathbb{F}H_{12}\oplus\mathbb{F}H_2$. Applying \Cref{construction} twice we can construct the poset structure on $X'$ in such a way that $M_{12}=M'$ and $M_{23}=\mathds{1}$, as $G$-graded bimodules. Now, \Cref{case2} guarantees that we can impose a structure of $G$-graded bimodule on $M_{13}$ so that it coincides with $M$. Again, from \Cref{case2}, the given $G$-grading on $I(X')$ is a well-defined structure of $G$-graded algebra.
\end{proof}

Now, we are in a position to state the main result of this section.

\begin{Thm}\label{mainresult}
Let $G$ be an abelian group, and let $H_1$, $H_2$, $H_3\subseteq G$ finite subgroups, and for $i\le j$, let $H_{ij}=H_i\cap H_j$ and $M_{ij}$ be a $G$-graded $(\mathbb{F}H_i,\mathbb{F}H_j)$-bimodule. Let $X$ be a poset and consider a $G$-grading on $I(X)$ such that
\[
I(X)\cong_G\left(\begin{array}{ccc}\mathbb{F}H_1&M_{12}&M_{13}\\&\mathbb{F}H_2&M_{23}\\&&\mathbb{F}H_3\end{array}\right).
\]

Then $\chi\in[M_{13}]$ if, and only if, there exist $\chi_{12}\in[M_{12}]$ and $\chi_{23}\in[M_{23}]$ such that $\chi|_{H_{123}}=\left(\chi_{12}|_{H_{123}}\right)\left(\chi_{23}|_{H_{123}}\right)$. In each of these cases, one has $\deg\chi=(\deg\chi_{12})(\deg\chi_{23})$.
\end{Thm}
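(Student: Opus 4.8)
The plan is to reduce the assertion to the already-settled special cases \Cref{case1} (where $H_3\subseteq H_2$) and \Cref{case2} (where $H_2\subseteq H_3$), by computing $M_{13}$ inside a graded incidence algebra obtained by inserting an extra level between $\widehat{H_2}$ and $\widehat{H_3}$. As in the opening of the proofs of those two lemmas, I would first use the bilinearity of the product $M_{12}\times M_{23}\to M_{13}$ together with \Cref{firstlemma} to reduce to the case $M_{12}\cong_G[(\chi_{12},g_{12})]$ and $M_{23}\cong_G[(\chi_{23},g_{23})]$, so that it suffices to prove
\[
M_{13}\cong_G[(\chi,g_{12}g_{23})\mid\chi\in\widehat{H_{13}},\ \chi|_{H_{123}}=(\chi_{12}|_{H_{123}})(\chi_{23}|_{H_{123}})];
\]
the general case is then recovered by reading off characters and degrees summand by summand in the decompositions of $M_{12}$ and $M_{23}$. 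Since, as recalled before \Cref{firstlemma}, the graded-isomorphism class of $M_{13}$ depends only on those of $M_{12}$ and $M_{23}$, I may carry out this computation in any graded incidence algebra having $M_{12}$ and $M_{23}$ in the corners $(1,2)$ and $(2,3)$.

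For the computation I would take the four-level poset $Y=\widehat{H_1}\,\dot\cup\,\widehat{H_2}\,\dot\cup\,\widehat{H_{23}}\,\dot\cup\,\widehat{H_3}$, with the order between consecutive levels given by \Cref{construction} for the parameters $(\chi_{12},g_{12})$, $(\chi_{23},g_{23})$ and $(1,1)$ respectively, and with the order between non-consecutive levels obtained by composing chains; and I would grade $I(Y)$ by placing $\mathbb{F}H_1\oplus\mathbb{F}H_2\oplus\mathbb{F}H_{23}\oplus\mathbb{F}H_3$ on the diagonal and grading each $e_{\eta\eta'}$ through an arbitrary chain from $\eta$ to $\eta'$. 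The step I expect to be the main obstacle is to show that this grading is well defined, i.e.\ that the four triangles of $Y$ are consistent. Since $H_{23}=H_2\cap H_3$ is contained in both $H_2$ and $H_3$, the triangles on the levels $(\widehat{H_1},\widehat{H_2},\widehat{H_{23}})$, $(\widehat{H_1},\widehat{H_{23}},\widehat{H_3})$ and $(\widehat{H_2},\widehat{H_{23}},\widehat{H_3})$ are consistent by \Cref{case1}, \Cref{case2} and \Cref{case2} respectively — this is exactly the mechanism behind the proof of \Cref{increasedposet}, carried out with one more level — and consistency of the remaining triangle, on $(\widehat{H_1},\widehat{H_2},\widehat{H_3})$, then follows formally by factoring the relevant chains through the level $\widehat{H_{23}}$.

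It remains to identify the corner $L$ of $I(Y)$ spanned by the $e_{\eta\eta'}$ with $\eta\in\widehat{H_1}$ and $\eta'\in\widehat{H_3}$. Regarding $\widehat{H_1}\,\dot\cup\,\widehat{H_2}\,\dot\cup\,\widehat{H_3}$ as a graded incidence subalgebra of $I(Y)$: its $(1,2)$-corner is $M_{12}$, its $(2,3)$-corner is $N\cdot\mathds{1}$ where $N=[(\chi_{23},g_{23})]$ is the $(\mathbb{F}H_2,\mathbb{F}H_{23})$-bimodule and $\mathds{1}=[(1,1)]$, and \Cref{case2} applied to the triangle $(\widehat{H_2},\widehat{H_{23}},\widehat{H_3})$ gives $N\cdot\mathds{1}\cong_G M_{23}$; so the principle that $M_{13}$ is determined by $M_{12}$ and $M_{23}$ yields $L\cong_G M_{13}$. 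On the other hand, $L=M_{12}\cdot(N\cdot\mathds{1})=(M_{12}\cdot N)\cdot\mathds{1}$ by associativity, and I would evaluate it in two steps: \Cref{case1} applied to the triangle $(\widehat{H_1},\widehat{H_2},\widehat{H_{23}})$ (using $H_{23}\subseteq H_2$ and $H_1\cap H_{23}=H_{123}$) gives $M_{12}\cdot N\cong_G[(\widehat\chi,g_{12}g_{23})]$ with $\widehat\chi=(\chi_{12}|_{H_{123}})(\chi_{23}|_{H_{123}})\in\widehat{H_{123}}$, and then \Cref{case2} applied to the triangle $(\widehat{H_1},\widehat{H_{23}},\widehat{H_3})$ (using $H_{23}\subseteq H_3$ and $H_1\cap H_{23}\cap H_3=H_{123}$) gives
\[
L\cong_G[(\chi,g_{12}g_{23})\mid\chi\in\widehat{H_{13}},\ \chi|_{H_{123}}=\widehat\chi].
\]
Comparing the two descriptions of $L$ establishes the claim in the irreducible case, and the general statement follows as explained. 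Apart from the well-definedness of the grading on $I(Y)$, the argument is routine bookkeeping with the restriction maps $\widehat{H_i}\to\widehat{H_{123}}$.
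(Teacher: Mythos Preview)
Your proposal is correct and follows essentially the same approach as the paper's proof: reduce to irreducible $M_{12}$ and $M_{23}$, insert an auxiliary level $\widehat{H_{23}}$ to obtain a four-level graded incidence algebra (this is precisely the content of \Cref{increasedposet}, which the paper invokes directly rather than reconstructing inline), and then apply \Cref{case1} followed by \Cref{case2} to compute $M_{13}$ through the intermediate $(\mathbb{F}H_1,\mathbb{F}H_{23})$-bimodule $N$. The only cosmetic difference is that the paper enlarges the given poset $X$ so that the original $M_{13}$ already sits in the $(1,4)$-corner, whereas you build the four-level poset from scratch and invoke the ``$M_{13}$ is determined by $M_{12}$ and $M_{23}$'' principle to identify the corner $L$ with $M_{13}$.
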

\begin{proof}
As before, we may assume, without loss of generality, that $M_{i,i+1}\cong_G[(\chi_{i,i+1},g_{i,i+1})]$, for $i=1$ and $i=2$. We replace the poset $X$ with a larger poset $X'$, as in \Cref{increasedposet}, in such a way that
\[
I(X')\cong_G\left(\begin{array}{cccc}\mathbb{F}H_1&M_{12}&N&M_{13}\\&\mathbb{F}H_2&M_{23}'&M_{23}\\&&\mathbb{F}H_{23}&\mathds{1}\\&&&\mathbb{F}H_3\end{array}\right).
\]
Since $H_{23}\subseteq H_2$, \Cref{case1} describes the structure of $N$ as the extensions of the product of the characters of $M_{12}$ and $M_{23}'$. Restricting to the graded subalgebra
\[
\left(\begin{array}{ccc}\mathbb{F}H_1&N&M_{13}\\&\mathbb{F}H_{23}&\mathds{1}\\&&\mathbb{F}H_3\end{array}\right),
\]
then \Cref{case2} characterizes the structure of $M_{13}$ as the extensions of the characters of $N$.
\end{proof}

\section{Final remarks}
We summarize the results obtained so far. Thus, we obtain the following structure of finite-dimensional graded algebras that may be realized as an incidence algebra endowed with a group grading.

\begin{Thm}\label{realizationgradedalgebra}
Let $\mathbb{F}$ be a field, $G$ an abelian group, $H_1,\ldots,H_t\subseteq G$ finite subgroups, for $i<j$, let $M_{ij}$ be a $G$-graded $(\mathbb{F}H_i,\mathbb{F}H_j)$-bimodule (which can be $0$). Denote $H_{ij}=H_i\cap H_j$ and $H_{ijk}=H_i\cap H_j\cap H_k$, and denote $M_{ij}=\mathcal{V}_{ij}\up$, where $\mathcal{V}_{ij}\in\YD{H_{ij}}{G}$. Let
\[
\mathcal{A}=\left(\begin{array}{cccc}\mathbb{F}H_1&M_{12}&\ldots&M_{1t}\\&\mathbb{F}H_2&\ddots&\vdots\\&&\ddots&M_{t-1,t}\\&&&\mathbb{F}H_t\end{array}\right),
\]
Then, there exists a poset $X$ and a $G$-grading on $I(X)$ such that $I(X)\cong_G\mathcal{A}$ if and only if:
\begin{enumerate}
\item For each $i=1,\ldots,t$, $\mathrm{char}\,\mathbb{F}$ does not divide $|H_i|$ and $\mathbb{F}$ contains a primitive $\mathrm{exp}\,H_i$-root of $1$,
\item $\mathcal{V}_{ij}$ is a $\mathbb{F}H_{ij}$-submodule of the regular module $\mathbb{F}H_{ij}$,
\item For each $i<j<k$ such that $M_{ij}\ne0$ and $M_{jk}\ne0$, one has:
\begin{enumerate}
\item for each $\chi\in[M_{ik}]$, there exists $\chi_{ij}\in[M_{ij}]$ and $\chi_{jk}\in[M_{jk}]$ such that $\chi|_{H_{ijk}}=(\chi_{ij}|_{H_{ijk}})(\chi_{jk}|_{H_{ijk}})$ and $\deg\chi=(\deg\chi_{ij})(\deg\chi_{jk})$,
\item for each $\chi\in\widehat{H_{ik}}$ such that there exists $\chi_{ij}\in[M_{ij}]$ and $\chi_{jk}\in[M_{jk}]$ satisfying $\chi|_{H_{ijk}}=(\chi_{ij}|_{H_{ijk}})(\chi_{jk}|_{H_{ijk}})$, one has $\chi\in[M_{ik}]$.
\end{enumerate}
\end{enumerate}
\end{Thm}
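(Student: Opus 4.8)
The plan is to deduce \Cref{realizationgradedalgebra} from the results already assembled, treating the three conditions as the necessary and sufficient obstructions for realizability. First I would dispense with the easy direction: if $I(X)\cong_G\mathcal A$, then by \cite[Theorem 1]{SSY} (the structure theorem quoted in \Cref{S_Preliminaries}) we may assume $\mathcal A$ is exactly the upper triangular shape with diagonal $\mathbb FH_i$, which forces $\operatorname{char}\mathbb F\nmid|H_i|$ and $\mathbb F\supseteq$ a primitive $\exp H_i$-th root of unity, giving (1); \cite[Theorem 2]{SSY} (equivalently \Cref{realization}) forces each $\mathcal V_{ij}$ to embed in the regular $\mathbb FH_{ij}$-module, giving (2); and restricting the grading to the graded subalgebra supported on rows and columns $\{i,j,k\}$ puts us exactly in the hypothesis of \Cref{mainresult}, which yields (3)(a) and the "only if" half of (3)(b). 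So the necessity of all three conditions is essentially a bookkeeping exercise using the earlier theorems applied to graded (sub)quotients.

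For the converse I would construct the poset explicitly, generalizing the two-block construction of \Cref{construction} and the three-block glueing of \Cref{increasedposet}. Set $X=\widehat{H_1}\,\dot\cup\,\cdots\,\dot\cup\,\widehat{H_t}$, with the diagonal grading making the diagonal part $\bigoplus_i\mathbb FH_i$ via the character-orthogonality map $\psi$ of \Cref{construction}. For $i<j$ I would use \Cref{construction} to impose on the block $M_{ij}\subseteq I(X)$ (spanned by the $e_{\eta_i\eta_j}$ with $\eta_i\precsim\eta_j$) the prescribed bimodule structure, i.e. the relations $\eta_i\precsim_\chi\eta_j\iff p_i(\eta_i)=\chi\,p_j(\eta_j)$ ranging over $\chi\in[M_{ij}]$, while recording $\deg\chi$ for the grading degree. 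The point is that conditions (1) and (2) guarantee each individual block can be so realized (this is precisely \Cref{realization}), and condition (3) is exactly what is needed for these choices to be \emph{mutually compatible}: for a triple $i<j<k$, the product $M_{ij}\cdot M_{jk}\subseteq M_{ik}$ computed in $I(X)$ lands — by the same character computation as in \Cref{firstlemma,preliminarycase} — in the span of those $e_{\eta_i\eta_k}$ whose character restricts correctly to $H_{ijk}$, and (3)(a)+(3)(b) say this coincides with $[M_{ik}]$ together with the correct degrees. Hence $\psi$ extends to a well-defined graded algebra isomorphism $\mathcal A\to I(X)$, exactly as in the final paragraph of the proof of \Cref{construction} but now with the radical possibly of length up to $t-1$.

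The main obstacle I expect is verifying associativity / well-definedness of the grading on $I(X)$ when the radical has length $\geq 3$, i.e. showing that all the triple-compatibility conditions in (3) suffice to make \emph{every} longer product $M_{i_1 i_2}\cdots M_{i_{r-1} i_r}\subseteq M_{i_1 i_r}$ consistent, not just the $r=3$ ones. The clean way to handle this is induction on $r$ using the "increase the poset" device of \Cref{increasedposet}: given a realization on the subalgebra supported on $\{i_1,\dots,i_{r-1}\}$, adjoin the copy of $\widehat{H_{i_r}}$ by first passing through an auxiliary block $\widehat{H_{i_{r-1},i_r}}$ carrying the trivial bimodule $\mathds 1$ and then collapsing, so that only three-block compatibility is ever invoked at each step — and three-block compatibility is precisely hypothesis (3). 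One should check that the iterated application does not disturb the already-constructed blocks, which follows because the new relations $\precsim$ only add comparabilities involving the freshly added $\widehat{H_{i_r}}$. Apart from this inductive packaging, the remaining verifications (that $\psi$ restricted to each block is bijective onto $M_{ij}$, that degrees multiply correctly) are routine and mirror the computations already carried out in \Cref{construction,case1,case2}.
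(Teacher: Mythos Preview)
Your proposal is correct and follows essentially the same route as the paper's proof: necessity of (1) and (2) from \cite[Theorems~1 and~2]{SSY}, necessity of (3) from \Cref{mainresult} applied to the $\{i,j,k\}$ graded subalgebra, and sufficiency by building $X=\widehat{H_1}\dot\cup\cdots\dot\cup\widehat{H_t}$ blockwise via \Cref{construction}/\Cref{realization} and invoking \Cref{mainresult} for compatibility. The paper's own proof is in fact much terser than yours---it simply cites these same ingredients without spelling out the $t$-block construction or the transitivity check---so your identification of the ``length $\geq 3$'' consistency issue and your proposed inductive resolution via the \Cref{increasedposet} device is more than the paper provides, not less; it is a legitimate elaboration of a point the paper leaves implicit.
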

\begin{proof}
Condition (1) is necessary and sufficient for $\mathbb{F}$ to be a splitting field for each group algebra $\mathbb{F}H_i$. Condition (2) provides a construction of the graded bimodules above the diagonal (\Cref{realization}). Condition (3) ensures that the grading is consistent with a group grading on an incidence algebra (\Cref{mainresult}).

Conversely, the conditions (1) and (2) are obtained in \cite[Theorems 1 and 2]{SSY}. The condition (3) is due to \Cref{mainresult}.
\end{proof}

Given a poset $(\mathcal{E},\le)$, recall that we say that $y$ \emph{covers} $x$ if $x\le y$, $x\ne y$, and $x\le z\le y$ implies that either $x=z$ or $z=y$. In this case, we denote $x\lessdot y$.

We will parameterize a group grading on an incidence algebra by a triple $((\mathcal{E},\le),\{H_i\}_{i\in\mathcal{E}},\{M_{ij}\}_{i\lessdot j})$, where $(\mathcal{E},\le)$ is a poset, each $H_i$ is a finite abelian subgroup of $G$ and $M_{ij}$ is a $G$-graded $(\mathbb{F}H_i,\mathbb{F}H_j)$-bimodule. The characters of $M_{ij}$ are pairwise distinct. Let $\mu_i\in\widehat{H_i}$, $\mu_j\in\widehat{H_j}$ and denote $M_{ij}\cong_G[(\chi_1,g_1),\ldots,(\chi_t,g_t))]$. We define $\mu_i M_{ij}\mu_j$ as the $G$-graded $(\mathbb{F}H_i,\mathbb{F}H_j)$-bimodule
\[
\mu_i M_{ij}\mu_j\cong_G[((\mu_i|_{H_{ij}})\chi_1(\mu_j|_{H_{ij}}),g_1),\ldots,((\mu_i|_{H_{ij}})\chi_t(\mu_j|_{H_{ij}}),g_t)].
\]

Although there is no restriction on the characters that may appear, the degrees must be consistent with condition (3) of \Cref{realizationgradedalgebra}; i.e., if $i\lessdot j\lessdot\ell$ and $i\lessdot k\lessdot\ell$, then for each equation $\chi_{ij}|_H\chi_{j\ell}|_H=\chi_{ik}|_H\chi_{k\ell}|_H$, where $H=H_i\cap H_j\cap H_k\cap H_\ell$, one has $\deg(\chi_{ij})\deg(\chi_{j\ell})=\deg(\chi_{ik})\deg(\chi_{k\ell})$.

The isomorphism classes of group gradings on incidence algebras are known. We state the result using our terminology for completeness.

\begin{Thm}\label{isoclasses}
Let $G$ be an abelian group, and let $\Gamma$ and $\Gamma'$ be $G$-gradings on finite-dimensional incidence algebras, parameterized by $((\mathcal{E},\le),\{H_i\}_{i\in\mathcal{E}},\{M_{ij}\}_{i\lessdot j})$ and $((\mathcal{E}',\le),\{H_i'\}_{i\in\mathcal{E}'},\{M_{ij}'\}_{i\lessdot j})$, respectively. Then $\Gamma\cong\Gamma'$ if, and only if, there exists a poset isomorphism $\alpha:\mathcal{E}\to\mathcal{E}'$ and a set of characters $(\mu_i)_{i\in\mathcal{E}}$, $\chi_i\in\widehat{H_i}$, such that:
\begin{enumerate}
\item $H_i'=H_{\alpha(i)}$, for each $i\in\mathcal{E}$,
\item $M_{ij}\cong_G \mu_i(M_{\alpha(i),\alpha(j)})\mu_j$, for each $i\lessdot j$ (see \Cref{isogradedbimodules}).
\end{enumerate}
\end{Thm}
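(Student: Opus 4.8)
The plan is to prove \Cref{isoclasses} by reducing the isomorphism question to local data attached to covering relations, exactly in the spirit of how incidence algebras are determined by their Hasse diagrams. First I would recall that any $G$-graded isomorphism $\varphi\colon I(X)\to I(X')$ preserves the Jacobson radical and its powers, hence induces a $G$-graded isomorphism on the associated graded algebra $\bigoplus_k J^k/J^{k+1}$. Because the grading on each $I(X)$ is graded-isomorphic to a graded triangular algebra with group-algebra diagonal blocks (the cited \cite[Theorem 1]{SSY}), the diagonal part $I(X)/J$ is a direct sum of the $\mathbb{F}H_i$, and a $G$-graded isomorphism must permute these blocks by an algebra isomorphism $\mathbb{F}H_i\to\mathbb{F}H_{\alpha(i)}'$, giving the poset isomorphism $\alpha\colon\mathcal{E}\to\mathcal{E}'$ together with $H_i'=H_{\alpha(i)}$ (condition (1)). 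The key point here is that a $G$-graded algebra isomorphism $\mathbb{F}H_i\cong_G\mathbb{F}H_j'$ of group algebras of finite abelian subgroups of $G$ forces $H_i=H_j'$ as subgroups, since the support of $\mathbb{F}H_i$ with its canonical grading is precisely $H_i$.

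Next I would analyze the conjugation ambiguity. A graded automorphism of $\mathbb{F}H_i$ that is the identity on the support is exactly multiplication by a character: it sends $h\mapsto\mu_i(h)h$ for some $\mu_i\in\widehat{H_i}$ (this is the standard description of the graded automorphism group of a twisted-free group algebra, and uses that $\mathbb{F}$ is a splitting field). So after composing $\varphi$ with the product of these block-automorphisms, I may assume $\varphi$ restricts to the canonical identification on each diagonal block. The effect of the correction characters on the off-diagonal bimodules $M_{i,i+1}$, more generally on each covering bimodule $M_{ij}$ with $i\lessdot j$, is precisely the twist $M_{ij}\mapsto\mu_i M_{ij}\mu_j$ defined right before the theorem: conjugating the left action by $\mu_i$ multiplies the left $H_i$-action, conjugating the right action by $\mu_j$ multiplies the right $H_j$-action, and on the level of the parametrization $[(\chi_k,g_k)]$ this changes $\chi_k$ to $(\mu_i|_{H_{ij}})\chi_k(\mu_j|_{H_{ij}})$ while leaving the degrees $g_k$ untouched. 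Then $\varphi$ restricted to $J/J^2\cong\bigoplus_{i\lessdot j}M_{ij}$ is a $G$-graded $\bigl(\bigoplus\mathbb{F}H_i,\bigoplus\mathbb{F}H_j\bigr)$-bimodule isomorphism, which (again using that $\varphi$ matches $\alpha$ on the diagonal, so it cannot mix incomparable-in-$\mathcal{E}$ pieces) decomposes as a family of $G$-graded bimodule isomorphisms $M_{ij}\cong_G \mu_i(M_{\alpha(i),\alpha(j)}')\mu_j$ for each $i\lessdot j$. Combined with \Cref{isogradedbimodules}, which says such an isomorphism is detected by equality of characters and $H_i$-$H_j$-double-coset equality of degrees, this gives condition (2).

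For the converse, given $\alpha$ and $(\mu_i)$ satisfying (1) and (2), I would build a $G$-graded isomorphism $I(X)\to I(X')$ directly: on the diagonal send $e_{\eta_i}$ of $\widehat{H_i}$ to the matching idempotent of $\widehat{H_{\alpha(i)}}=\widehat{H_i}$, and on each covering bimodule use the bimodule isomorphism furnished by (2); then propagate to non-covering bimodules $M_{ik}$ (with $i<k$ not a cover) using \Cref{mainresult}, which determines $M_{ik}$ — its characters and their degrees — entirely from the composition data along any saturated chain from $i$ to $k$, so the assignment on covers forces a unique compatible assignment on all of $J$ and hence a well-defined graded algebra homomorphism; bijectivity is clear since it is bijective on each homogeneous bimodule piece. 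I expect the main obstacle to be the bookkeeping in the last step: verifying that the bimodule isomorphisms chosen on distinct covering relations $i\lessdot j\lessdot\ell$ and $i\lessdot k\lessdot\ell$ splice together consistently on $M_{i\ell}$, i.e. that the degree-compatibility condition (the displayed equation $\deg(\chi_{ij})\deg(\chi_{j\ell})=\deg(\chi_{ik})\deg(\chi_{k\ell})$ stated before the theorem) is automatically respected. This is exactly where \Cref{mainresult} does the work — it pins down $[M_{i\ell}]$ independently of the chain — so the coherence is not an extra hypothesis but a consequence, and the argument is a matter of assembling the per-covering isomorphisms into a global one rather than checking a genuine cocycle condition.
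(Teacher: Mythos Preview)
The paper's own proof of this theorem is a single sentence: ``This is a restatement of \cite[Theorem 34]{SSY}.'' In other words, the paper does not prove the result; it records it for completeness and defers entirely to the earlier work.

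Your proposal, by contrast, sketches a self-contained argument using the tools developed in \emph{this} paper: the Jacobson-radical filtration to isolate the diagonal and the covering bimodules, the fact that a $G$-graded automorphism of $\mathbb{F}H$ for abelian $H$ is a character twist, \Cref{isogradedbimodules} to read off the effect on the parameters $(\chi_k,g_k)$, and \Cref{mainresult} to propagate from covering relations to arbitrary comparable pairs. The outline is sound in both directions. What your approach buys is independence from \cite{SSY} and a clearer link between the isomorphism criterion and the structural results of the present paper; what the paper's approach buys is brevity, since the result is already in the literature.

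Two small points worth tightening. First, at the moment you introduce $\alpha$ it is only a bijection of the index sets, obtained from the permutation of graded-simple summands of $I(X)/J$; that it is a \emph{poset} isomorphism only follows once you know $\varphi$ carries each $M_{ij}$ to $M'_{\alpha(i),\alpha(j)}$, i.e.\ after the $J/J^2$ analysis, so the order of your exposition should reflect that. Second, in the converse you build the global map by choosing bimodule isomorphisms on covers and extending via the surjective products $M_{ij}\otimes M_{jk}\to M_{ik}$; the consistency across different saturated chains from $i$ to $\ell$ is indeed forced, but the reason is not quite that \Cref{mainresult} ``pins down $[M_{i\ell}]$'' --- it is that the products in both $I(X)$ and $I(X')$ are already associative algebras, so once the diagonal and the generators of $J$ (namely $J/J^2$) are matched by an algebra-compatible map, the extension to all of $J$ is unique and automatically coherent. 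With those clarifications your sketch would stand as a genuine proof, which is more than the paper itself supplies here.
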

\begin{proof}
This is a restatement of \cite[Theorem 34]{SSY}.
\end{proof}

\end{document}